\titlespacing{\subparagraph}{0em}{0em}{0.5em}
\DeclarePairedDelimiter\abs{\lvert}{\rvert}%
\DeclarePairedDelimiter\norm{\lVert}{\rVert}%
\let\oldabs\abs
\def\abs{\@ifstar{\oldabs}{\oldabs*}}
\let\oldnorm\norm
\def\norm{\@ifstar{\oldnorm}{\oldnorm*}}
\setlist[enumerate]{noitemsep, partopsep=0pt, topsep=0pt, parsep=0pt, itemsep=0pt}
\setlist[itemize]{noitemsep, partopsep=0pt, topsep=0pt, parsep=0pt, itemsep=0pt}
\crefname{equation}{}{}
\newlist{theoenum}{enumerate}{1} 
\setlist[theoenum]{label=\normalfont(\roman*), ref=\theproposition~\normalfont(\roman*), noitemsep, partopsep=0pt, topsep=0pt, parsep=0pt, itemsep=0pt}
 \newcommand{\blue}{\color{blue}}
\DeclareMathOperator{\supp}{supp}
\newcommand*\diff{\mathop{}\!\mathrm{d}}
\newcommand{\N}{\mathbb{N}}
\newcommand{\R}{\mathbb{R}}
\newcommand{\Z}{\mathbb{Z}}
\newcommand{\p}{\mathtt p}
\newcommand{\bbH}{\mathbb{H}}
\newcommand{\prob}{\mathscr P}
\newcommand{\vol}{\mathrm{dvol}}
\newcommand\restr[2]{{
  \left.\kern-\nulldelimiterspace 
  #1 
  \right|_{#2} 
  }}
\newcommand{\cd}{\mathsf{CD}}
\newcommand{\RCD}{\mathsf{RCD}}
\newcommand{\Ric}{\mathrm{Ric}}
\newcommand{\X}{\mathsf{X}}
\newcommand{\Lip}{\mathsf {Lip}}
\newcommand{\di}
{\mathsf d} 
\newcommand{\m}{\mathfrak m} 
\DeclareMathOperator{\Geo}{Geo}
\newcommand{\Prob}{\mathscr{P}}
\newcommand{\Sbb}{\mathbb{S}}
\renewcommand{\S}{\mathbb{S}}
\renewcommand{\epsilon}{\varepsilon}
\renewcommand{\phi}{\varphi}
\newcommand{\hes}{\mathrm{Hess}}
\newcommand{\G}{\mathbb{G}}
\newcommand{\Scal}{\mathcal{S}}
\date{\today}
\title{\textbf{\uppercase{\large{Curvature-dimension condition of sub-Riemannian $\alpha$-Grushin half-spaces}}}}
\author{Samu\"el Borza\footnote{University of Vienna, Universit\"atsring 1, 1010 Vienna, Austria. \textit{E-mail}:  \href{mailto:samuel.borza@univie.ac.at}{samuel.borza@univie.ac.at}}, \ 
and Kenshiro Tashiro\footnote{Okinawa Institute of Science and Technology, 1919-1 Tancha, Onna-son, Kunigami-gun, Okinawa, Japan. 
\textit{E-mail}:  \href{mailto:kenshiro.tashiro@oist.jp}{kenshiro.tashiro@oist.jp}}}
\newtheoremstyle{remark}
        {10pt}
        {10pt}
        {}
        {}
        {\itshape}
        {.}
        {.4em}
        {}
\newtheoremstyle{proof}
        {10pt}
        {10pt}
        {}
        {}
        {\itshape}
        {.}
        {.4em}
        {}
\newtheoremstyle{definition}
        {10pt}
        {10pt}
        {}
        {}
        {\bfseries}
        {.}
        {.4em}
        {}
\newtheoremstyle{theorem}
        {10pt}
        {10pt}
        {\slshape}
        {}
        {\bfseries}
        {.}
        {.4em}
        {}
\theoremstyle{theorem}
\newtheorem{theorem}{Theorem}[section]
\newtheorem{prop}[theorem]{Proposition}
\newtheorem{lemma}[theorem]{Lemma}
\theoremstyle{definition}
\newtheorem{definition}[theorem]{Definition}
\theoremstyle{remark}
\newtheorem{remark}[theorem]{Remark}
\theoremstyle{proof}
\newtheorem*{pro}{Proof}
 {\popQED\end{pro}}
\renewcommand\xleftrightarrow[2][]{%
  \ext@arrow 9999{\longleftrightarrowfill@}{#1}{#2}}
\newcommand\longleftrightarrowfill@{%
  \arrowfill@\leftarrow\relbar\rightarrow}
\begin{document}

\maketitle

\providecommand{\keywords}[1]
{
	\textbf{\textit{Keywords---}} #1
}

\providecommand{\msc}[1]
{
	\textbf{\textit{MSC (2020)---}} #1
}

\begin{abstract}
	 We provide new examples of sub-Riemannian manifolds with boundary equipped with a smooth measure that satisfy the $\mathsf{RCD}(K , N)$ condition. They are constructed by equipping the half-plane, the hemisphere and the hyperbolic half-plane with a two-dimensional almost-Riemannian structure and a measure that vanishes on their boundary. The construction of these spaces is inspired from the geometry of the $\alpha$-Grushin plane.
\end{abstract}

\keywords{Sub-Riemannian geometry, RCD spaces, optimal transport}

\msc{53C17, 53C21, 49Q22}

\tableofcontents
\section{Introduction}
\label{section:Introduction}

In the past few decades, the curvature-dimension condition $\mathsf{CD}(K, N)$ was specifically developed to generalise to the non-smooth setting of metric measure spaces
the concept of a lower bound on the $N$-Bakry-Émery Ricci tensor $\mathrm{Ric}_{N,V} \geq K$ on weighted Riemannian manifolds, using the theory of optimal transport (see \cite{villani,lott--villani2009, sturm2006}).
The $\RCD(K,N)$ condition is a restriction of the $\cd(K,N)$ condition by requiring in addition that the space is infinitesimally Hilbertian, which excludes Finsler manifolds from being $\RCD$-spaces (see \cite{ambrosioICMs, Gigli12,Erbar2015} for example).
The conditions $\cd(K,N)$ and $\RCD(K,N)$ have not only established known meaningful geometric inequalities to non-smooth spaces, but also helped obtain new results, even in the smooth setting.

However, it has been found that the large class of sub-Riemannian manifolds equipped with a smooth positive measure does not satisfy any of the $\mathsf{CD}$ conditions. In the Heisenberg group, this was first proven in \cite{juillet2005} and then for any Carnot groups in \cite{ambrosio2020}. The same author then showed in \cite{juillet2020} that the same result holds for any sub-Riemannian manifold whose distribution has constant rank strictly smaller than its topological dimension. A no-$\mathsf{CD}$ theorem for two-dimensional rank-varying structures was then found in \cite{magnaboscorossi}. Finally, the most general result to date, established in \cite{rizzi2023failure}, states that no curvature-dimension condition can hold for any sub-Riemannian manifold equipped with a positive smooth measure. As a side note, we refer the reader interested in the study of curvature-dimension in the sub-Finsler setting to the works \cite{borzatashiro}, \cite{magnabosco2023failure}, \cite{borzatashiroSIU2024n1} and \cite{borzatashiroSIU2024n2}.

Surprisingly, it was discovered in \cite{panwei,Pan23} that it suffices to consider a sub-Riemannian structure on a manifold with boundary and equip it with a smooth measure that vanishes on the boundary points to construct an example of a $\mathsf{CD}$-space in sub-Riemannian geometry.
This example, which we revisit here, is also discussed in \cite{rizzi2023failure}. For $\alpha \geq 0$, the $\alpha$-Grushin plane $\mathbb{G}_\alpha$ is the sub-Riemannian structure on $\R^2$ induced by the vector fields
\[
X = \partial_x,~~~~Y^\alpha = \abs{x}^\alpha \partial_y.
\]
This defines a metric space $(\mathbb{G}_\alpha, \diff_{\mathbb{G}_\alpha})$ that admits the metric tensor
\[g_{\G_\alpha}=\diff x\otimes \diff x+\frac{1}{|x|^{2\alpha}}\diff y\otimes \diff y,\]
at non-singular points.
In \cite{panwei,rizzi2023failure}, the authors equip this metric space with the
the $\beta$-weighted measure $\mathfrak{m}_\beta = \abs{x}^{\beta - \alpha} \diff x \diff y$. Note that this weighted measure vanishes on the singular set $\{x = 0\}$ if $\beta>\alpha$, it coincides with the Lebesgue measure if $\beta=\alpha$,
and it fails to be locally finite if {\blue $\beta\leq \alpha-1$}. Although \cite{panwei} and \cite{rizzi2023failure} examined the same space, their techniques are different. In \cite{panwei}, Pan--Wei constructed a Riemannian manifold $M := [0,+\infty) \times_{f} \S^{n-1} \times_{g} \S^1$ with a doubly warped product metric such that
$f(r)\sim\sqrt{r}$ and $g(r)\sim r^{-2\alpha}$ as the first factor $r$ goes to $+\infty$.
It can be verified that $M$ has positive Ricci curvature.
By taking the asymptotic cone of the universal cover based at a specific point,
it can be argued that its (collapsed) limit is a Ricci limit space satisfying $\RCD(0,n+1)$.
Montgomery pointed out that this limit space is actually the  $\alpha$-Grushin half-plane.

In \cite{rizzi2023failure},
Rizzi--Stefani revisited this example by showing that the half-plane $\overline{\mathbb{G}}_\alpha^+ := \{ x \geq 0\}$ and the open half-plane $\mathbb{G}_\alpha^+ := \{ x > 0\}$ are both geodesically convex subsets of $\mathbb{G}_\alpha$ and that the weighted incomplete Riemannian manifold $(\mathbb{G}_\alpha^+, \diff_{\mathbb{G}_\alpha}, \mathfrak{m}_{\beta})$ satisfies the condition $\mathrm{Ric}_{N,V} \geq K$ if and only if
\begin{equation}
    \label{eq:KNagrushinhalfplane}
    K\leq 0,~~\text{and}~~-\alpha^2-\alpha+\beta\cdot \min\left(\alpha,1-\frac{\beta}{N-2}\right)\geq 0,
\end{equation}
where $V(x,y)=-\beta\log|x|$ is a singular potential.
Actually, \cite{rizzi2023failure} only considered the case $\alpha = 1$ but the general case $\alpha \geq 0$ is obtained in the same way.
They showed that the $\alpha$-Grushin half-plane $\overline{\G}_\alpha^+$ verifies $\cd(K,N)$ for the values of $K \in \R$ and $N \in (2, +\infty]$ such that \cref{eq:KNagrushinhalfplane} holds, using the geodesic convexity and the weighted Ricci curvature lower bound on the interior $\G_\alpha^+$. They also proved that $\overline{\G}_\alpha^+$ is infinitesimally Hilbertian, and thus the $\mathsf{RCD}(K, N)$ is also satisfied.

The goal of this paper is to provide a few more examples of this phenomenon. We introduce a sub-Riemannian structure on the hemisphere and on the hyperbolic half-plane that depend on a parameter $\alpha \geq 0$, and by tweaking their unbounded ``Riemannian'' volume measure with a parameter $\beta \geq \alpha$, we obtain metric measure spaces that are $\mathsf{RCD}$-spaces. One motivation for studying these kinds of singular spaces is the singular Weyl's laws studied in \cite{boscain-prandi-seri2016, Chitour-Prandi-Rizzi24, honda2023} and the references therein.

In \cref{section:alphaGrushin}, we define the $\alpha$-Grushin hemisphere and its $\beta$-weighted measure. This metric measure space is an example of sub-Riemannian manifold where the $\mathsf{RCD}(K, N)$ condition holds for some $K > 0$.

\begin{theorem}
\label{thm:main1}
The metric measure space consisting of the sub-Riemannian $\alpha$-Grushin hemisphere equipped with its $\beta$-weighted measure satisfies the following properties.
\begin{enumerate}[label=\normalfont(\roman*)]
    \item For any $\alpha\geq 0$ and $K>0$, there is $\beta\geq\alpha$ such that the $\RCD(K,\infty)$ condition is satisfied.
    \item For any $\alpha\geq 1$ and $N\geq 2+4\alpha(\alpha+1)$, there is $\beta>\alpha$ such that $\RCD(0,N)$ holds.
    \item When $\RCD(0,N)$ is satisfied for some $N \in \linterval{2}{+\infty}$, there is $K>0$ such that $\RCD(K,N)$ holds.
\end{enumerate}
\end{theorem}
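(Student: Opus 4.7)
The plan is to follow the strategy of Rizzi--Stefani established in \cite{rizzi2023failure} for the $\alpha$-Grushin half-plane, transported to the spherical setting. The first step is to identify the sub-Riemannian $\alpha$-Grushin hemisphere as the closure of a weighted incomplete Riemannian manifold on which the $\beta$-weighted measure has density $|x|^{\beta-\alpha}$ with respect to the Riemannian volume, for a natural coordinate $x$ vanishing on the singular equator, and to verify that the hemisphere is geodesically convex inside an ambient sub-Riemannian structure on the sphere. Geodesic convexity should follow from a direct analysis of the minimising geodesics exploiting the rotational symmetry of the structure, as was done for the half-plane.

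The core computation is that of the $N$-Bakry--\'Emery Ricci tensor with potential $V=-\beta\log|x|$ on the smooth Riemannian interior. Compared with the planar case, the intrinsic curvature of the sphere produces additional positive contributions, and this should yield an inequality for $\Ric_{N,V}\geq K\,g$ of the schematic form
\[
K \leq f(\alpha,\beta,N) + h_{\mathrm{sph}}(x),
\]
where $f$ is an algebraic expression in $\alpha,\beta,N$ parallel to the right-hand side of \cref{eq:KNagrushinhalfplane} and $h_{\mathrm{sph}}$ is a non-negative spherical correction, strictly positive on any compact subset of the smooth interior. The main technical obstacle is carrying out this Ricci computation cleanly and extracting from it both the dimensional threshold in (ii) and the quantitative spherical gap needed for (iii).

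Given such an inequality, each assertion follows by a careful choice of $\beta$: for (i), since no finite-dimensional constraint appears when $N=\infty$, one takes $\beta$ sufficiently large so that $f(\alpha,\beta,\infty)+\inf h_{\mathrm{sph}}\geq K$, which is possible for any $\alpha\geq 0$ and any prescribed $K>0$; for (ii), one solves $f(\alpha,\beta,N)\geq 0$ for $\beta>\alpha$, and optimising over $\beta$ reproduces, as in the half-plane case, the threshold $N\geq 2+4\alpha(\alpha+1)$; for (iii), compactness of the hemisphere combined with the strict positivity of $h_{\mathrm{sph}}$ away from the singular set forces a uniform $K>0$ with $\Ric_{N,V}\geq K\,g$ as soon as $\RCD(0,N)$ is already known to hold, since the positive spherical term is then not spent on cancelling a negative algebraic contribution.

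The final step is to upgrade the pointwise weighted Ricci lower bound on the smooth interior to the global $\RCD(K,N)$ condition on the closed hemisphere. As in \cite{rizzi2023failure}, geodesic convexity together with the vanishing of the $\beta$-weighted measure on the singular equator transfers the $\cd(K,N)$ bound from the interior to the whole hemisphere, and infinitesimal Hilbertianity is inherited from the smooth Riemannian interior using that the singular set is negligible. Combining these steps produces the desired $\RCD(K,N)$ condition.
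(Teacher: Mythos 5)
Your overall strategy coincides with the paper's: establish geodesic convexity of the hemisphere, compute the weighted $N$-Ricci tensor of the Riemannian interior with the logarithmic potential, transfer $\Ric_{N,V}\geq K$ to the $\cd(K,N)$ condition on the closed hemisphere via a Rizzi--Stefani-type equivalence (the paper's \cref{theorem:equivalence}), and obtain infinitesimal Hilbertianity from the almost-Riemannian structure. Parts (i) and (ii) would indeed come out of this plan once the computation is actually performed: the constraint coming from the singular equator is asymptotically the planar one (whence the same threshold $N\geq 2+4\alpha(\alpha+1)$), while the constraint at the pole reads $K\leq \beta-3\alpha+1$ and is what allows arbitrarily large $K$ by taking $\beta$ large.

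For (iii), however, your mechanism has a genuine gap. The criterion the paper derives is that two expressions affine in $\cos^2(x)\in[0,1)$ be non-negative, equivalently
\[
\beta-\alpha^2-\alpha+\min\Bigl((\alpha-1)^2-K,\ -\tfrac{\beta^2}{N-2},\ \beta(\alpha-1)\Bigr)\geq 0,
\]
and this is not of your schematic form $K\leq f(\alpha,\beta,N)+h_{\mathrm{sph}}(x)$ with a non-negative spherical correction: the parameter $K$ enters only through the pole constraint, and the pole is an \emph{interior} point, so ``compactness plus strict positivity of $h_{\mathrm{sph}}$ away from the singular set'' does not deliver a uniform gap --- $\Ric_{N,V}\geq 0$ does not self-improve to strict positivity at the pole by soft arguments. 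Indeed your mechanism would equally ``prove'' (iii) for $\alpha=1$, $\beta=2$, $N=\infty$, where one computes $\Ric_{\infty,V}\equiv 0$ identically and no $K>0$ works. The actual reason (iii) holds (for finite $N$) is the algebraic observation that the minimum above is then attained by a $K$-independent term strictly smaller than $(\alpha-1)^2$, leaving slack to decrease $(\alpha-1)^2-K$; this requires the explicit formula, not positivity heuristics. Two smaller points: near the equator both $\Ric_{N,V}$ and the metric blow up, so the comparison $\Ric_{N,V}\geq Kg$ there must be checked in the limit (the affine-in-$\cos^2(x)$ structure is what makes this tractable), and you take the density of the $\beta$-weighted measure with respect to the Riemannian volume to be $\abs{x}^{\beta-\alpha}$ whereas the paper's is $\abs{\sin(x)}^{\beta}$, which would shift $\beta$ in all the thresholds.
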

Similarly, we introduce in \cref{section:alphaGrushin2} the $\alpha$-Grushin hyperbolic half-plane and its $\beta$-weighted measure. This metric measure space is an example of sub-Riemannian manifold where the $\mathsf{RCD}(K, N)$ condition holds for some $K < 0$.

\begin{theorem}
\label{thm:main2}
    The metric measure space consisting of the sub-Riemannian $\alpha$-Grushin hyperbolic half-plane equipped with its $\beta$-weighted measure satisfies the following properties.
    \begin{enumerate}[label=\normalfont(\roman*)]
    \item For any $\alpha\geq 1$ and $N\geq 2+4\alpha(\alpha+1)$,
    there are $\beta>\alpha$ and $K<0$ such that $\RCD(K,N)$ holds.
    \item For any $\alpha,\beta\geq0$ and $N\in(2,+\infty)$, it does not satisfy $\RCD(0,N)$. Furthermore, it satisfies $\RCD(0,+\infty)$ if and only if $\alpha=1$ and $\beta\geq 2$.
    \item For any $\alpha\geq 1$,
        there are $\beta>\alpha$ and $N\in(-\infty,0)$ such that $\cd(0,N)$ holds.
\end{enumerate}
\end{theorem}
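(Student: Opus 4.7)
The plan is to follow exactly the same route as in the proof of \cref{thm:main1} for the hemisphere and as in Rizzi--Stefani's treatment of the flat case in \cite{rizzi2023failure}. On the open hyperbolic half-plane the $\alpha$-Grushin structure is Riemannian, so one can explicitly compute the $N$-Bakry-Émery Ricci tensor $\Ric_{N,V}$ with respect to the potential $V$ encoding the $\beta$-weighted measure. Combined with geodesic convexity of the open half-plane inside the ambient hyperbolic $\alpha$-Grushin plane and infinitesimal Hilbertianity of its closure (both of which should be established by the same reflection/doubling arguments as in the hemisphere case), a lower bound $\Ric_{N,V}\geq K$ on the smooth interior transfers into the $\RCD(K,N)$ property on the whole closure.

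For item~(i), I expect the hyperbolic ambient geometry to contribute a constant negative term to $\Ric$, while the $\alpha$-Grushin piece and the $\beta$-weight add, exactly as in \cref{eq:KNagrushinhalfplane} for the flat case, a singular term of order $\alpha(\alpha+1)/|x|^{2}$ near the boundary. Choosing $N$ large enough allows $\beta$ to absorb this singular term; the admissibility range $\alpha\geq 1$, $N\geq 2+4\alpha(\alpha+1)$ with a suitable $\beta>\alpha$ should emerge as the sharp algebraic condition for a lower bound $\Ric_{N,V}\geq K$, with $K<0$ chosen to accommodate the remaining negative hyperbolic contribution.

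For item~(ii), the failure of $\RCD(0,N)$ with finite $N\in(2,+\infty)$ should follow from the Bishop-Gromov inequality: the hyperbolic structure forces at least exponential growth of large balls, incompatible with the polynomial growth dictated by $\RCD(0,N)$. The rigid characterisation of $\RCD(0,+\infty)$ is then obtained by examining the explicit formula $\Ric_{\infty,V}=\Ric+\hes V$: the constant hyperbolic curvature must be exactly cancelled by the Grushin-weight contribution, which pins $\alpha=1$ and yields the linear threshold $\beta\geq 2$. Sufficiency of $\alpha=1$, $\beta\geq 2$ is recovered from the same convexity and Hilbertianity arguments as in item~(i) with $K=0$.

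For item~(iii), I would invoke the extended curvature-dimension framework for effective dimensions $N<0$. In that regime the sign of the $(\nabla V\otimes\nabla V)/(N-2)$ term in $\Ric_{N,V}$ reverses, so the $\beta^2/|x|^2$ contribution that obstructs the $N>0$ analysis now becomes beneficial. Choosing $\beta>\alpha$ and $|N|$ large then yields $\Ric_{N,V}\geq 0$ on the interior, which through the negative-$N$ extension of Sturm-Lott-Villani theory translates into $\cd(0,N)$ on the closure. The main obstacle will be the rigidity statement in~(ii): showing that $\RCD(0,+\infty)$ \emph{forces} $\alpha=1$ and $\beta\geq 2$ requires a delicate coupled inspection of the three competing contributions (hyperbolic curvature, Grushin anisotropy, weight) and a verification that no other combination of parameters can render the Bakry-Émery tensor non-negative.
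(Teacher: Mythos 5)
Your skeleton coincides with the paper's: compute $\Ric_{N,V}$ on the Riemannian interior $\bbH_\alpha^+$ (via the warped-product formula of \cref{e:Rictensor} with $f=\abs{\sinh x}^\alpha/\cosh x$ and $V=-\beta\log\abs{\sinh x}$), transfer the bound to the closed half-plane through the geodesic-convexity equivalence of \cref{theorem:equivalence}, and add infinitesimal Hilbertianity from \cite{ledonne-lucic-pasqualetto2023}. The paper reduces everything to the single algebraic criterion
\[
\min\left(-K-(\alpha-1)^2,\ \beta-\alpha^2-\alpha\right)+\min\left(-\tfrac{\beta^2}{N-2},\ \beta(\alpha-1)\right)\geq 0,
\]
from which (i), (ii) and (iii) are one-line case checks; your guesses about the $\alpha(\alpha+1)/x^2$ singularity near the boundary and the threshold $N\geq 2+4\alpha(\alpha+1)$ (the discriminant condition for the quadratic in $\beta$) are correct. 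The one genuinely different step is your Bishop--Gromov argument for the failure of $\RCD(0,N)$, $N<\infty$: the paper gets this for free from the ``only if'' direction of \cref{theorem:equivalence}, since at $K=0$ both minima above are nonpositive and vanish simultaneously only if $\alpha=1$ \emph{and} $\beta=0$, contradicting $\beta\geq\alpha$. Your volume-growth route also works (the weight $\abs{\sinh x}^{\beta}$ gives $\m(B_R)\gtrsim R\,e^{\beta R/2}$ for $\beta>0$, and $\beta=\alpha=0$ is literally $\bbH^2$), but it costs you a distance/volume estimate the paper never needs. One caution on your heuristics: the obstruction at infinity is not ``the constant curvature of $\bbH^2$'' --- the interior metric is $\diff x^{2}+(\cosh^{2}x/\sinh^{2\alpha}x)\diff y^{2}$, whose curvature is non-constant, and the relevant term is $-(\alpha-1)^2\cosh^2(x)/\sinh^2(x)\to-(\alpha-1)^2$, which vanishes exactly at $\alpha=1$; this is what pins the rigidity in (ii), and it falls out of the explicit formula rather than requiring the ``delicate coupled inspection'' you anticipate.
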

The negativity of $K$ is essential in the sense that the $\alpha$-Grushin hyperbolic half-plane satisfies the $\cd(0,N)$ condition only if $N$ is negative or $+\infty$.
An odd phenomenon occurs at $\alpha=1$, where the $1$-Grushin hyperbolic half-plane satisfies $\RCD(0,+\infty)$. It is unclear if this phenomenon is relevant to the validity of hyperbolicity or $\mathsf{CAT}(0)$ property in the $1$-Grushin hyperbolic plane.
\begin{remark}
In \cite{Pan23}, Pan constructed a sequence of doubly warped metric spaces which collapses to the $1$-Grushin hemisphere. We do not know if a similar construction holds for other $\alpha$-Grushin spaces, nor if the limit measure in such a construction would be equal to our $\beta$-weighted measure.
\end{remark}

As another generalisation of the $\alpha$-Grushin plane, we introduce a new sub-Riemannian manifold with infinite Hausdorff dimension, which we call the $\infty$-Grushin plane.
The description of this space is found in \cref{section:alphaGrushin3}.
By tweaking the Riemannian volume measure by multiplicative factors that depend on two parameters $\beta$ and $\gamma$,
the $\infty$-Grushin half-plane is shown to satisfy the $\RCD(0,+\infty)$.

\begin{theorem}
\label{thm:main3}
   The metric measure space consisting of the sub-Riemannian $\infty$-Grushin half-plane equipped with its $(\beta, \gamma)$-weighted measure satisfies the following properties.
\begin{enumerate}[label=\normalfont(\roman*)]
    \item There are $\beta,\gamma\geq 0$ such that $\RCD(0,+\infty)$ holds.
    \item For any $\beta,\gamma\geq 0$ and $K\in \R$, there is no $N\in(2,+\infty)$ such that $\RCD(K,N)$ holds.
\end{enumerate}
\end{theorem}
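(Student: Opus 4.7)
The plan is to follow the blueprint already set in this paper for the $\alpha$-Grushin half-plane and hemisphere, reducing the $\RCD$ property on the closed half-plane to a weighted Bakry--Émery Ricci computation on its almost-Riemannian interior. First I would verify that the open and closed $\infty$-Grushin half-planes are geodesically convex subsets of the ambient $\infty$-Grushin plane; this should follow from the $y$-translation symmetry and the monotone behaviour of the conformal factor in $x$ exactly as in \cite{rizzi2023failure}. Next, on the interior, the metric is smooth Riemannian and the $(\beta,\gamma)$-weight is $e^{-V}$ times the Riemannian volume for an explicit logarithmic/polynomial potential $V(x,y)$ that blows up at $\{x=0\}$. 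I would then compute $\Ric_{\infty,V}=\Ric+\hes V$ and identify choices of $\beta,\gamma\geq 0$ making it nonnegative, which by invariance in $y$ reduces to the nonnegativity of a one-variable expression in $x$ built out of the warping factor and its first two derivatives together with $\beta$ and $\gamma$.

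Once such a pair $(\beta,\gamma)$ is fixed, the curvature-dimension condition $\cd(0,+\infty)$ on the closed half-plane is obtained by the standard argument: geodesic convexity guarantees that $W_2$-geodesics between absolutely continuous measures supported in the interior remain in the interior, and the weighted Ricci bound yields geodesic convexity of the relative entropy along them; the boundary being negligible with respect to the $(\beta,\gamma)$-weighted measure then extends the entropy convexity to the whole closed half-plane. Infinitesimal Hilbertianness, and hence the upgrade from $\cd$ to $\RCD$, follows as in the earlier sections from the absolute continuity of the measure together with the fact that the Cheeger energy is induced by the almost-Riemannian quadratic form away from the singular line. This proves~(i).

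For (ii), the crucial input is that the $\infty$-Grushin plane has infinite Hausdorff dimension, as will be established in \cref{section:alphaGrushin3}. By Sturm's Bishop--Gromov inequality for $\cd(K,N)$ metric measure spaces with finite $N$, the Hausdorff dimension of the support is bounded above by $N$. Since every neighbourhood of a singular point already carries sub-Riemannian balls of arbitrarily large Hausdorff dimension, no finite $N>2$ can accommodate $\RCD(K,N)$, regardless of $K\in\R$ and of the weight parameters $\beta,\gamma\geq 0$. This simultaneously rules out all cases in~(ii) with one argument.

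The main obstacle is expected to be the tuning in~(i): because the conformal factor of the $\infty$-Grushin metric degenerates exponentially rather than polynomially, the terms contributed by $\hes V$ and by $\Ric$ scale very differently in $x$, and balancing them with only two parameters $\beta,\gamma$ requires a careful ansatz for the weight. I anticipate that the right weight will not be a pure power of a coordinate function, as in the $\alpha<\infty$ case, but a mixture of a power and an exponential of $|x|$, whose exponents are precisely what $\beta$ and $\gamma$ parametrise.
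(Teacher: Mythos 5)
Your outline for part (i) is essentially the paper's proof: geodesic convexity of the half-plane, the reduction of $\cd(0,+\infty)$ on the closed half-plane to $\Ric_{\infty,V}=\Ric+\hes(V)\geq 0$ on the interior via \cref{theorem:equivalence}, the explicit one-variable curvature computation, and infinitesimal Hilbertianity from \cite{ledonne-lucic-pasqualetto2023}. The computation does close: with $V=\gamma/x^2-\beta\log\abs{x}$ the paper finds $\Ric_{\infty,V}\geq 0$ if and only if $\min\{\beta(6\gamma-1),\,8(\beta-2)\gamma\}\geq 1$, which is satisfiable (e.g.\ $\beta=3$, $\gamma=1$); note only that the weight is prescribed by \cref{def:infty_measure} as $\abs{x}^\beta e^{-\gamma/x^2}$ times the Riemannian volume, an exponential in $1/x^2$ rather than in $\abs{x}$, so there is no freedom of ansatz left to you.

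For part (ii) you take a genuinely different route. The paper again argues through the curvature tensor: the $\diff x\otimes\diff x$ coefficient of $\Ric_{N,V}$ contains the term $-\tfrac{1}{(N-2)x^6}(2\gamma+\beta x^2)^2$, which tends to $-\infty$ as $x\to 0$ for every finite $N>2$ and $\gamma>0$, so $\Ric_{N,V}\geq K$ fails on the interior for every $K$, and the \emph{only if} direction of \cref{theorem:equivalence} kills $\cd(K,N)$. You instead invoke Sturm's Bishop--Gromov consequence that the support of the reference measure of a $\cd(K,N)$ space with $N<+\infty$ has Hausdorff dimension at most $N$, combined with \cref{lemma:infinitehausdorffdimension}: since the singular line lies in the support of $\m^{\beta,\gamma}_{\G_\infty}$ and has infinite Hausdorff dimension, no finite $N$ is admissible. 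This is a valid and arguably more robust argument --- it bypasses the equivalence theorem and the curvature computation entirely, and it would rule out $\cd(K,N)$ for \emph{any} fully supported measure on this space, not just the $(\beta,\gamma)$-weighted one. What the paper's computation buys in exchange is sharper local information (the exact characterisation of when $\Ric_{N,V}\geq K$ holds, pinpointing the blow-up in the $\diff x\otimes\diff x$ direction at the singular set) and uniformity of method across all three model spaces. Both proofs of (ii) are correct.
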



  \begin{remark}\label{rmk:compact}
    The following consequence is to be noted.
\cref{thm:main3} implies that, for any $N\geq 2$, Gromov--Hausdorff limits of a sequence of metric measure spaces in the class
    \[\mathcal{X}_N:=\left\{(X,\di,\m)\mid \text{proper, }\RCD(0,\infty) \text{ and } \dim_H(X,\di)\leq N\right\}\]
    can have infinite Hausdorff dimension, i.e. $\overline{\mathcal{X}_N}\not\subset \bigcup_{n\in\N}\mathcal{X}_n$.
    Indeed, for $\epsilon>0$, the subsets $\R_{\geq \epsilon}\times \R$ of the $\infty$-Grushin half-plane are weighted Riemannian manifolds with boundary contained in $\mathcal{X}_2$,
    which converges to the $\infty$-Grushin half-plane as $\epsilon\to 0$, which has infinite Hausdorff dimension (see \cref{lemma:infinitehausdorffdimension}).
\end{remark}

The strategy of the proof of the previous results is fairly simple. After studying these spaces in detail in \cref{section:themodels}, we show in \cref{section:equivalence} that, under the right conditions of smoothness and geodesic convexity (see \cref{theorem:equivalence}), the validity of the $\mathsf{CD}(K, N)$ condition for a metric measure space whose interior is Riemannian is equivalent to the bound $\mathrm{Ric}_{N,V} \geq K$ on its interior. The values of $(K, N)$ for which the $\mathsf{CD}(K, N)$ and $\mathsf{RCD}(K, N)$ conditions holds in the $\alpha$-Grushin half-spaces are then just a matter of Ricci curvature computations, which we provide in \cref{section:riccicomputation}.

\section*{Acknowledgments}
This project has received funding from the European Research Council (ERC) under the European Union’s Horizon 2020 research and innovation program (grant agreement No. 945655).
The authors thank Shouhei Honda for leading the authors' attention to these problems, and telling us the consequences in \cref{rmk:compact}.

\section{Preliminaries}
\subsection{The \texorpdfstring{$\cd(K,N)$}{CD(K, N)} and \texorpdfstring{$\RCD(K,N)$}{RCD(K, N)}  conditions}
Firstly,
we start by recalling the curvature-dimension condition $\cd(K,N)$ and Riemannian curvature-dimension condition $\RCD(K,N)$.
A metric measure space is a triple $(\X,\diff,\m)$ where $(\X,\diff)$ is a complete, separable, locally compact and geodesic metric space, and $\m$ is a non-negative Radon measure on it.
We emphasize that the metric measure spaces will always be assumed to be essentially non-branching.
We denote by $C([0, 1], \X)$ the space of continuous curves from $[0, 1]$ to $\X$, and for $s \in [0, 1]$, we let $e_s : C([0, 1], \X)\ni \gamma\mapsto \gamma(s) \in \X$ be the evaluation map.
A geodesic $\gamma:[0,1]\to \X$ is a curve such that $\diff(\gamma(s), \gamma(t)) = |t - s| \diff(\gamma(0), \gamma(1))$ for all $s, t \in [0, 1]$, and we denote by $\Geo(\X)$ the space of all geodesics on $(\X,\diff)$.
Furthermore, the set of Borel probability measures on $\X$ is denoted by $\Prob(\X)$ and the set of those having finite second momentum by $\Prob_2(\X) \subseteq \Prob(\X)$. We endow the space $\Prob_2(\X)$ with the Wasserstein distance $W_2$, defined by
\begin{equation}\label{wasserstein}
W_2^2(\mu_0, \mu_1) := \inf_{\pi \in \mathsf{Adm}(\mu_0,\mu_1)}  \int \diff^2(x, y) \, \diff \pi(x, y),
\end{equation}
where $\mathsf{Adm}(\mu_0, \mu_1):=\{\pi\in \X\times \X\mid (\p_i)_\sharp \pi = \mu_i,\ \p_i : \text{the projection to the $i$-th factor}~(i=1,2)\}$.
The metric space $(\Prob_2(\X),W_2)$ is itself complete, separable and geodesic.
A probability measure $\pi\in \prob(\X\times \X)$ which attains the minimum values in \cref{wasserstein} is called an optimal transport plan.

\paragraph{The \texorpdfstring{$\cd(K,N)$}{CD(K, N)} condition for positive \texorpdfstring{$N$}{N}.}
In this context, the curvature-dimension condition is defined as follows.
For every $K \in \R$, $N\in [1,\infty)$ and $t\in [0,1]$, the \emph{distortion coefficients} are the functions

\begin{equation*}
\tau_{K,N}^{(t)}(\theta):=
\begin{cases}

\displaystyle  +\infty & \textrm{if}\  (N-1)\pi^{2}\leq K\theta^{2} ~\text{ and }~K\theta^2>0,\\
\displaystyle t^{1/N}\left(\frac{\sin(t\theta\sqrt{K/N})}{\sin(\theta\sqrt{K/N})}\right)^{1-1/N} & \textrm{if}\  0 < K\theta^{2} < (N-1)\pi^{2},\\
t & \textrm{if}\ 
K\theta^2 =0, ~~\text{or}~~K\theta^2<0~\text{ and }~N=1, \\
\displaystyle  t^{1/N}\left(\frac{\sinh(t\theta\sqrt{-K/N})}{\sinh(\theta\sqrt{-K/N})}\right)^{1-1/N} & \textrm{if}\ K\theta^2 < 0~\text{ and }~N>1.
\end{cases}
\end{equation*}

\begin{definition}[{$\cd(K,N)$ condition for $N\in[1,+\infty]$, \cite{sturm2006, lott--villani2009}}]\label{def.2.1}
For $K\in\R$ and $N\in[1,+\infty]$, a metric measure space $(\X,\di,\m)$ is said to satisfy the $\cd(K,N)$ condition if for every pair of absolutely continuous measures $\mu_0=\rho_0\m,\mu_1= \rho_1 \m \in \Prob_2(\X)$, there exists an optimal transport plan $\pi\in \prob(\X\times \X)$ and an absolutely continuous $W_2$-geodesic $(\rho_t\m)_{t\in [0,1]}$ connecting them such that the following inequality holds for every $N' \geq N$ and every $t \in [0,1]$:
\begin{equation*}\label{eq:CDpositive}
    \int_\X \rho_t^{1-\frac 1{N'}} \diff \m \geq \int_{\X \times \X} \Big[ \tau^{(1-t)}_{K,N'} \big(\diff(x,y) \big) \rho_{0}(x)^{-\frac{1}{N'}} +    \tau^{(t)}_{K,N'} \big(\diff(x,y) \big) \rho_{1}(y)^{-\frac{1}{N'}} \Big]    \diff \pi( x,y),
\end{equation*}
if $N < +\infty$, and, if $N = +\infty$,
 \[
    \int_\X\rho_t\log\rho_t \diff\m \leq (1 - t)\int_\X\rho_0\log\rho_0 \diff\m + t \int_\X\rho_1\log\rho_1 \diff\m - \frac{K}{2} t(1 - t) W_2^2(\mu_0, \mu_1).\]
\end{definition}
\noindent It is not difficult to see that if a metric measure space satisfies the $\mathsf{CD}(K, N)$ condition for some $K \in \R$ and $N \in [1, +\infty]$, then it also satisfies the $\mathsf{CD}(K', N')$ for any $K' \leq K$ and $N' \in [N,+\infty]$, see \cite[Prop. 1.6]{sturm2006}.


\paragraph{The \texorpdfstring{$\cd(K,N)$}{CD(K, N)} condition for negative \texorpdfstring{$N$}{N}.}
The curvature-dimension condition $\cd(K,N)$ for negative $N$ is defined in a similar way.
For $K\in\R$ and $N<0$,
 we set the distortion coefficients
\begin{equation*}
\tilde{\tau}_{K,N}^{(t)}(\theta):=
\begin{cases}

\displaystyle  +\infty & \textrm{if}\  (N-1)\pi^{2}\geq K\theta^{2},\\
\displaystyle t^{1/N}\left(\frac{\sin(t\theta\sqrt{K/N})}{\sin(\theta\sqrt{K/N})}\right)^{1-1/N} & \textrm{if}\  0>K\theta^{2} > (N-1)\pi^{2},\\
t & \textrm{if}\ 
K\theta^2 =0, \\
\displaystyle  t^{1/N}\left(\frac{\sinh(t\theta\sqrt{-K/N})}{\sinh(\theta\sqrt{-K/N})}\right)^{1-1/N} & \textrm{if}\ K\theta^2 > 0.
\end{cases}
\end{equation*}

\noindent The following definition was first introduced in \cite{Ohta-negativeCD}.
\begin{definition}[{$\cd(K,N)$ condition for $N\in(-\infty,0)$}]
For $K\in\R$ and $N\in(-\infty,0)$,
a metric measure space $(\X,\di,\m)$ is said to satisfy the $\cd(K,N)$ condition if for every pair of absolutely continuous measures $\mu_0=\rho_0\m,\mu_1= \rho_1 \m \in \Prob_2(\X)$, there are an optimal transport plan $\pi\in\prob(\X\times \X)$ and an absolutely continuous a $W_2$-geodesic $(\rho_t\m)_{t\in [0,1]}$ connecting them such that the following inequality holds for every $N'\in[N,0)$ and every $t \in [0,1]$:
\begin{equation*}\label{eq:CDnegative}
    \int_\X \rho_t^{1-\frac 1{N'}} \diff \m \leq \int_{\X \times \X} \Big[ \tilde{\tau}^{(1-t)}_{K,N'} \big(\diff(x,y) \big) \rho_{0}(x)^{-\frac{1}{N'}} +    \tilde{\tau}^{(t)}_{K,N'} \big(\diff(x,y) \big) \rho_{1}(y)^{-\frac{1}{N'}} \Big]    \diff \pi( x,y).
\end{equation*}
\end{definition}
\noindent The $\cd(K,N)$ condition for negative $N$ has been found to appear naturally when studying harmonic measures on the sphere in \cite{Milman2017}, where the author uses previous results from \cite{Milman2017a} to obtain new isoperimetric inequalities for these harmonic measures. This condition is further studied in works such as \cite{Magnabosco2023b, Magnabosco2023a, Magnabosco2023c}. We also have the following consistency property: if a metric measure space satisfies the $\mathsf{CD}(K, N)$ condition for some $K \in \R$ and $N < 0$, then it also satisfies the $\mathsf{CD}(K', N')$ condition for every $K' \leq K$ and $N' \in [N, 0)$. We also have that the $\mathsf{CD}(K, +\infty)$ condition implies the $\mathsf{CD}(K, N)$ condition for any $N < 0$, see \cite[Lemma 2.9]{Ohta-negativeCD}.

\paragraph{The \texorpdfstring{$\RCD(K,N)$}{RCD(K, N)} condition.}
In addition to the $\cd(K,N)$ condition,
we need to introduce infinitesimal Hilbertianity to define $\RCD(K,N)$ spaces, see \cite{Gigli12}.
On a proper metric measure space $(\X,\diff,\m)$,
the Cheeger energy $\mathrm{Ch}:L^2(\X,\m)\to \R$ is defined by
\[\mathrm{Ch}(f):=\inf\left\{\liminf_{n\to \infty}\frac{1}{2}\int_\X\Lip(f_n)\diff \m\mid f_n\in\Lip_b(\X,\diff)\cap L^2(\X,\m),\|f_n-f\|_{L^2}\to 0\right\},\]
where $\Lip_b(\X,\diff)$ is the space of bounded Lipschitz functions and for $f\in\Lip_b(\X,\diff)$ and $x\in \X$,
\[\Lip(f)(x):=\limsup_{y\to x}\frac{|f(y)-f(x)|}{\diff(x,y)}.\]
Then the space of functions $H^{1,2}(\X,\diff,\m):=\{f\in L^2(\X,\m)\mid \mathrm{Ch}(f)<+\infty\}$ defines a Banach space with the norm $\|f\|_{H^{1,2}}:=\left(\|f\|_{L^2}^2+2\mathrm{Ch}(f)^2\right)^{1/2}$.

There are different equivalent definitions of infinitesimal Hilbertianity, and the following is the one we choose here, see \cite{Gigli12} and \cite{Ambrosio2013} for more details.

\begin{definition}[Infinitesimal Hilbertianity and $\RCD(K,N)$ condition]
    A metric measure space $(\X,\diff,\m)$ is said to be infinitesimally Hilbertian if $(H^{1,2}(\X,\diff,\m),\|\cdot\|_{H^{1,2}})$ is a Hilbert space.

    Furthermore, given $K\in\R$ and $N\in[1,+\infty]$, the metric measure space $(\X,\diff,\m)$ satisfies the $\RCD(K,N)$ condition if it verifies the $\cd(K,N)$ condition and is infinitesimally Hilbertian.
\end{definition}



\begin{remark}
    We do not speak about the $\mathsf{RCD}$ condition for $N < 0$ for a few reasons. Firstly, the $\mathsf{CD}$ condition for negative $N$, contrary to when $N \geq 1$, allows for measures $\mathfrak{m}$ that are not locally finite. If that's the case, it is not known if the equivalences of the weak gradients in \cite[Sections 7 and 8]{Ambrosio2013} still hold. Secondly, one of the reasons the $\mathsf{RCD}(K, N)$ condition is  particularly successful is because it is equivalent to the (synthetic) Bakry-\'Emery condition $\mathsf{BE}(K, N)$ under the Sobolev-to-Lipschitz property, see \cite{Erbar2015} and \cite{Ambrosio2015}.
    In our specific setting, the measures are always nonnegative, and we only use the equivalence between $\cd(K,N)$ and the smooth Ricci lower bound $\mathrm{Ric}_{N,V} \geq K$.
    Once the gaps in the $\mathsf{RCD}$ theory for negative effective dimension will have been filled, one should be able to replace $\mathsf{CD}(K, N)$ with $\mathsf{RCD}(K, N)$ when $N < 0$ in this paper, e.g. in (iii) of \cref{thm:main2}
\end{remark}

\subsection{Sub-Riemannian geometry}
\label{subsection:subR}

Before we introduce the $\alpha$-Grushin spaces, we recall some basic facts about sub-Riemannian and metric geometry. For a more comprehensive account of these topics, we refer the reader to \cite{ABB-srgeom} and \cite{burago-burago-sergei2001}, for example.
The $\alpha$-Grushin spaces considered in this work will all be two-dimensional almost-Riemannian manifolds and we recommend especially \cite[Chapter 9]{ABB-srgeom}.

A sub-Riemannian structure on an $n$-dimensional manifold $M$ is given by a set of $m$ globally defined vector field $\mathcal{F} := \{X_1, \dots, X_m\}$, also called the generating frame. The associated distribution is defined as the family, indexed with $x \in M$, of the vector subspaces
\[
\mathcal{D}_x := \mathrm{span} \{X_1(x), \dots, X_m(x)\} \subseteq T_xM.
\]
From this data, it is possible to introduce an inner product $g_x$ on $\mathcal{D}_x$ by applying the polarisation formula to
\begin{equation}
    \label{eq:subRmetric}
    g_x(v, v) := \inf \left\{ \sum_{i = 1}^m u_i^2 \mid \sum_{i = 1}^m u_i X_i(x) = v  \right\}.
\end{equation}

The rank of the sub-Riemannian structure at $x \in M$ is defined by $r(x) := \dim (\mathcal{D}_x)$. A two-dimensional almost-Riemannian manifold is a sub-Riemannian structure on a two-dimensional manifold $M$ such that the cardinal of $\mathcal{F}$ is two.
In a two dimensional sub-Riemannian structure, a point $x \in M$ such that $r(x) = 2$ (resp. $r(x) = 1$) is called a Riemannian point (resp. singular point). The singular set, i.e. the set of singular points, must necessarily be small (see \cite[Section 9.1.1]{ABB-srgeom}). In a neighborhood of Riemannian points,
the inner product \cref{eq:subRmetric} is a well-defined metric tensor and we can introduce, at those points, a Riemannian volume density (using the same formula as in Riemannian geometry) which diverges when approaching a singular point.

An admissible (or horizontal) curve $\gamma : \interval{0}{1} \to M$ is an absolutely continuous path such that there exists a control $u \in \mathrm{L}^2(\interval{0}{1}, \R^m)$ satisfying
\[
\dot{\gamma}(t) = \sum_{i = 1}^m u_i(t) X_i(\gamma(t)), \ \text{ for almost every } t \in \interval{0}{1}.
\]
The sub-Riemannian length of an admissible curve $\gamma : \interval{0}{1} \to M$ is then defined by
\[
\mathrm{Length}(\gamma) := \int_0^1 \sqrt{g_{\gamma(t)}(\dot{\gamma}(t), \dot{\gamma}(t))} \mathrm{d}t,
\]
and the sub-Riemannian distance between two points $x, y \in M$ is
\begin{equation}\label{e:distance}
\mathrm{d}(x, y) := \inf \left\{ \mathrm{Length}(\gamma) \mid \gamma \text{ admissible and joins } x \text{ and } y \right\}.
\end{equation}
It is not always given that $\mathrm{d}$ really defines a distance function. If $\mathcal{F}$ satisfies the bracket-generating condition (see \cite[Definition 3.1]{ABB-srgeom}), for instance, then Rashevskii-Chow theorem \cite[Section 3.2]{ABB-srgeom} implies that there exists an admissible curve between every two points of $M$, and that $(M, \diff)$ is a metric space with the metric and manifold topology coinciding.



Denoting by $\pi: T^\ast M \to M$ is the canonical bundle projection, the Hamiltonian is the map $H : T^*M \to \R$ defined by
\[
H(\lambda) :=  \frac{1}{2}\sum_{k = 1}^m \langle \lambda, X_k(\pi(\lambda)) \rangle^2 ~~~~\forall \lambda\in T_xM
\]
Pontryagin's Maximum Principle is very helpful in the search for
geodesics.

\begin{theorem}[Pontryagin's Maximum Principle]
\label{theorem:PMP}
     If $\gamma$ is length minimiser parametrised by constant speed, then there exists a Lipschitz curve $\lambda(t) \in T^*_{\gamma(t)}M$ such that one and only one of the following is satisfied:
\begin{enumerate}[label=\normalfont(\roman*)]
\item $\dot{\lambda} = \overrightarrow{H}(\lambda)$, where $\overrightarrow{H}$ is the unique vector field in $T^*M$ such that $\sigma(\cdot, \overrightarrow{H}(\lambda)) = \mathrm{d}_\lambda H$ for all $\lambda \in T^*M$;
\item $\langle \lambda(t), X_i(\gamma(t)) \rangle = 0$ for all $i = 1, \dots, m$, and $\lambda(t) \neq 0$ for all $t \in \interval{0}{1}$.
\end{enumerate}
\end{theorem}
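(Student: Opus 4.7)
The plan is to apply Lagrange multipliers to the endpoint map associated with the control system. Fix $x_0 := \gamma(0)$, set $y := \gamma(1)$, and let $u^\ast \in L^2([0,1],\R^m)$ be the control of $\gamma$. On an open neighbourhood $U$ of $u^\ast$, the endpoint map $E : U \to M$ defined by $E(u) := \gamma_u(1)$, where $\gamma_u$ solves $\dot\gamma_u = \sum_i u_i X_i(\gamma_u)$ with $\gamma_u(0) = x_0$, is smooth by standard ODE dependence on parameters. Constant-speed minimality implies that $u^\ast$ minimises the quadratic energy $J(u) := \tfrac{1}{2}\int_0^1 |u(t)|^2\,dt$ among controls in $E^{-1}(y)$.

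The dichotomy in the theorem mirrors the status of the differential $dE(u^\ast) : L^2 \to T_y M$. If it is surjective, the Lagrange multiplier theorem produces $\lambda_1 \in T^\ast_y M$ with $dJ(u^\ast) = \lambda_1 \circ dE(u^\ast)$; if not, Hahn--Banach provides a nonzero $\lambda_1 \in T^\ast_y M$ annihilating the range of $dE(u^\ast)$. To promote the endpoint covector $\lambda_1$ into a curve, I would compute $dE(u^\ast)$ by variation of constants: with $\Phi^{u^\ast}_{t,1}$ denoting the flow of $\sum_i u_i^\ast X_i$ from time $t$ to time $1$, one obtains
\[
dE(u^\ast)\cdot v \;=\; \sum_{i=1}^m \int_0^1 v_i(t)\,(\Phi^{u^\ast}_{t,1})_\ast X_i(\gamma(t))\,dt,
\]
so defining the Lipschitz curve $\lambda(t) := (\Phi^{u^\ast}_{t,1})^\ast \lambda_1 \in T^\ast_{\gamma(t)} M$ yields the identity $\langle \lambda_1, dE(u^\ast)\cdot v\rangle = \sum_i \int_0^1 v_i(t)\,\langle \lambda(t), X_i(\gamma(t))\rangle\,dt$. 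Non-vanishing of $\lambda$ is automatic since the cotangent flow is a linear isomorphism and $\lambda_1 \neq 0$.

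Each alternative then translates as follows. In the non-surjective case, the vanishing of the pairing for every test $v$ forces $\langle \lambda(t), X_i(\gamma(t))\rangle = 0$ for every $i$ and almost every $t$; Lipschitz continuity extends the identity to all $t$, producing (ii). In the surjective case, matching $dJ(u^\ast)\cdot v = \int_0^1 \sum_i u^\ast_i v_i\,dt$ against the above pairing forces the optimality condition $u^\ast_i(t) = \langle \lambda(t), X_i(\gamma(t))\rangle$. The final and most delicate step is then to differentiate $\lambda(t) = (\Phi^{u^\ast}_{t,1})^\ast \lambda_1$ in canonical coordinates $(x,p)$ on $T^\ast M$ and to identify the result with the Hamiltonian field $\overrightarrow H$: the flow linearisation contributes the $dX_i$ terms matching $-\partial_x H$, while the optimality condition substituted into $\dot x = \sum_i u^\ast_i X_i$ matches $\partial_p H$, the cross-terms reassembling thanks to the quadratic form of $H$ in the fibre. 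This coordinate identification, together with verifying that a single covector lift can be selected so that exactly one of (i) or (ii) is realised, is the main technical obstacle, and it is precisely where the symplectic characterisation $\sigma(\cdot,\overrightarrow H) = dH$ enters the argument.
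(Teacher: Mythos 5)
The paper does not prove this statement: Pontryagin's Maximum Principle is quoted as a classical result (the background reference is \cite{ABB-srgeom}), so there is no in-paper argument to compare against. Your outline is the standard endpoint-map proof from that reference: reduce length minimisation at constant speed to energy minimisation, split on surjectivity of $dE(u^\ast)$, obtain a terminal covector $\lambda_1$ either from the Lagrange multiplier rule or from the annihilator of the range, and transport it backwards by $(\Phi^{u^\ast}_{t,1})^\ast$. The variation-of-constants formula for $dE(u^\ast)$ and the resulting identity $\langle \lambda_1, dE(u^\ast)v\rangle = \sum_i \int_0^1 v_i\,\langle\lambda(t),X_i(\gamma(t))\rangle\,dt$ are correct, as is the deduction of (ii) in the non-surjective case and of the optimality condition $u_i^\ast(t)=\langle\lambda(t),X_i(\gamma(t))\rangle$ in the surjective one.

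Two points remain genuinely open in your write-up rather than merely technical. First, the identification $\dot\lambda=\overrightarrow{H}(\lambda)$ for $\lambda(t)=(\Phi^{u^\ast}_{t,1})^\ast\lambda_1$ is the core of case (i), and you defer it entirely; it requires writing the pullback flow in canonical coordinates, differentiating in $t$, and substituting the optimality condition so that the fibre equation $\dot p=-\partial_x H$ emerges --- without this the normal case is asserted, not proved. Second, the exclusivity in ``one and only one'' needs an argument: for a nonconstant minimiser, a lift satisfying (ii) has $H(\lambda(t))\equiv 0$, so if it also solved (i) then $\dot\gamma=\sum_i\langle\lambda,X_i\rangle X_i$ would vanish, a contradiction; you flag this but do not supply it. A minor further caveat, relevant to how the theorem is used in this paper: your smoothness of $E$ relies on smooth $X_i$, whereas the paper applies the principle to frames like $\abs{\sin(x)}^\alpha$ with $\alpha\notin\N$, which requires a separate (if routine) justification.
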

A curve $\lambda : \interval{0}{1} \to T^*M$ satisfying (i) (resp. (ii)) in the theorem above is called a normal (resp. abnormal) extremal. \cref{theorem:PMP} is thus stating that a (constant speed) minimising geodesic has a cotangent lift that is a normal or an abnormal extremal. Note that an extremal in a two-dimensional almost-Riemannian manifold is abnormal if and only if its projection is a constant curve that lies on the singular set (see \cite[
Theorem 9.2]{ABB-srgeom}).

For the remainder of this work, we will adopt the notation $(x)^{2 \alpha} := (x^2)^\alpha  \in \R_{\geq 0}$ for every $\alpha \geq 0$ and $x\in\R$.

\section{Geometry of \texorpdfstring{$\alpha$}{alpha}-Grushin half-spaces}
\label{section:themodels}

\subsection{The \texorpdfstring{$\alpha$}{alpha}-Grushin sphere and hemisphere}
\label{section:alphaGrushin}

On the two-dimensional Riemannian sphere $(\S^2, g_{\S^2})$, we introduce the following coordinate chart, the validity of which can be found in \cite{boscain-prandi-seri2016}.
Fix a large circle $\gamma:\R/2\pi\Z\to \S^2$,
and let $N$ and $S$ be the north pole and south pole of $\S^2$ respectively, with respect to $\gamma$. For $p\in \S^2\setminus\{N,S\}$,
we define $x := x(p) \in \ointerval{-\pi/2}{\pi/2}$ as the signed (spherical) distance $\diff_{\S^2}(p,\mathrm{Im}(\gamma))$,
where the sign is positive (resp. negative) if $p$ belongs to the hemisphere containing $N$ (resp. $S$).
Furthermore,
define the number $y=y(p)\in \R/2\pi\Z$ so that $\gamma(y)$ is the perpendicular foot from $p$ to $\mathrm{Im}(\gamma)$. The map $\S^2 \to \ointerval{-\pi/2}{\pi/2}\times \R/2\pi\Z  : p \mapsto (x(p), y(p))$ is a well-defined coordinate chart which can be naturally extended to $N$ and $S$ under the identification
$(\frac{\pi}{2},y_1) \sim (\frac{\pi}{2},y_2)$ and $(-\frac{\pi}{2},y_1) \sim (-\frac{\pi}{2},y_2)$ for any $y_1,y_2\in \R/2\pi\Z$. Note that $N$ (resp. $S$) corresponds to the equivalence class of $(\frac{\pi}{2},y)$ (resp. $(-\frac{\pi}{2},y)$). It is easily shown that in these coordinates, the spherical Riemannian metric tensor $g_{\S^2}$ possesses the warped product structure
\[
g_{\S^2} = \diff x \otimes \diff x + \cos^2(x) \diff y \otimes \diff y.
\]
\begin{remark}
The coordinate system $(x, y)$ is, up to rotations, the same as the spherical coordinates $(\phi,\theta) \in  (-\pi/2, \pi/2)\times \R / 2\pi\Z$
which parametrises the standard sphere $\S^2 \setminus \{N, S\}\subseteq \R^3$ by
\[(\theta,\phi)\mapsto (\cos(\theta)\cos(\phi),
\sin(\theta)\cos(\phi),\sin(\phi)).\]
\end{remark}
With this in mind, we can introduce the $\alpha$-Grushin sphere and hemisphere.
\begin{definition}
\label{def:alphasphere}
    For $\alpha \geq 0$, the $\alpha$-Grushin sphere $\S_\alpha$ is the sub-Riemannian structure on $\S^2$ induced from the vector field $X$ and $Y^\alpha$ given by
\[
X:=\partial_{x},~~Y^\alpha:=\frac{\abs{\sin(x)}^{\alpha}}{\cos(x)}\partial_{y}.
\]
The $\alpha$-Grushin hemisphere $\overline{\S}^+_\alpha$ (resp. open hemisphere $\S^+_\alpha$) is the subset of $\S_\alpha$ defined by
\[
\overline{\S}^+_\alpha := \{ p \in \S^2 \mid x \in [0,\pi/2] \} \text{ (resp. $\S^+_\alpha := \{ p \in \S^2 \mid x \in (0,\pi/2] \})$}.
\]

\begin{remark}
    When $\alpha$ is non-integer, the vector fields are not smooth and they are not bracket-generating. However, any pair of points can still be joined with a horizontal curve and Pontryagin's Maximum Principle stated in \cref{theorem:PMP} can be applied. Therefore, the Carnot-Carathéodory metric \cref{e:distance} can be constructed as in smooth sub-Riemannian geometry. This remark remains valid for the other model spaces introduced in this work.
\end{remark}

\begin{remark}
     Note that, strictly speaking, the structure introduced in \cref{def:alphasphere} does not fall into the definition of sub-Riemannian structure laid out in \cref{subsection:subR}. Indeed, the vector fields $X$ and $Y^\alpha$ are not global vector fields: they are not defined at the poles, i.e. at $x=\pm\pi/2$. To be completely rigorous, one should therefore check that it is still a sub-Riemannian manifold but according to the general definition of \cite[Definition 3.2]{ABB-srgeom}. There, a sub-Riemannian manifold is given by couple $(E, f)$ where $E$ is a Euclidean vector bundle and $f : E \to T M$ is a morphism of vector bundles.
     In our setting, $E = T\Sbb^2$ and $f$ is a morphism that we now construct explicitly.

    Denote by $p:T\Sbb^2\to \Sbb^2$ the bundle projection. Letting $\mathcal{U}_1 := \mathbb{S}^2 \setminus \{N, S\}$, the coordinate chart
    $\phi_1 := (x, y) : \mathcal{U}_1 \to \mathbb{R}^2$
    induces a chart on $T\mathbb{S}^2$, and we define
    $f_{1} : p^{-1}(\mathcal{U}_1) \to T \mathbb{S}^2$
    as the map, linear on fibers, that satisfies
    \[
    f_{1}(\partial_x) = \partial_x, \qquad f_{1}(\partial_y) = \abs{\sin(x)}^{\alpha} \partial_y.
    \]
    Letting $\mathcal{U}_2 := \{ (s, t, z) \in \mathbb{S}^2\subset \R^3 \mid z > 0 \}$ the map
    \[
    \phi_2 : \mathcal{U}_2 \to \mathbb{R}^2 :  (s, t, \sqrt{1 - s^2 - t^2}) \mapsto (s, t)
    \]
    is another coordinate chart on $\mathbb{S}^2$, which also induces a chart on $T \mathbb{S}^2$. We set $f_{2} : p^{-1}(\mathcal{U}_2) \to T \mathbb{S}^2$
    the map, linear on fibers, satisfying
    \[
    f_2(\partial_s) = \frac{1}{s^2+t^2} \left[ \left(s^2+(1-s^2-t^2)^{\alpha/2}t^2\right)  \partial_s + st(1-(1-s^2-t^2)^{\alpha/2}) \partial_t \right]
    \]
    and
    \[
    f_2(\partial_t) = \frac{1}{s^2+t^2} \left[ st(1-(1-s^2-t^2)^{\alpha/2}) \partial_s + \left((1-s^2-t^2)^{\alpha/2}s^2 + t^2 \right) \partial_t \right].
    \]
    This ensures $f_1=f_2$ on $p^{-1}(\mathcal{U}_1\cap\mathcal{U}_2)$ via the coordinate transformation $s=\cos(x)\cos(y)$, $t=\cos(x)\sin(y)$. The apparent singularity at $(s,t)=(0,0)$ is removable since
\[
f_2(\partial_s)\to\partial_s,
\quad
f_2(\partial_t)\to\partial_t
\quad\text{as }(s,t)\to(0,0),
\]
so $f_2$ extends smoothly over the origin, giving the identity on the fiber at $N$. The maps $f_1$ and $f_2$ are patched together to define a smooth, globally defined bundle morphism \(f\colon E\to T\Sbb^2\), and the pair $(E,f)$ defines a sub‐Riemannian manifold in the sense of \cite[Definition 3.2]{ABB-srgeom}.

\end{remark}

\end{definition}
The $0$-Grushin sphere is simply the two-dimensional Riemannian sphere $\S^2$. When $\alpha > 0$, the $\alpha$-Grushin sphere is a two-dimensional almost-Riemannian structure with $\{x = 0\}$ being its set of singular points. The Grushin sphere studied in \cite{boscain-prandi-seri2016,Pan23} corresponds to the $1$-Grushin sphere. At non-singular points, this sub-Riemannian structure admits the Riemannian metric
\begin{equation}
    \label{eq:SalphaRiemannianMetric}
    g_{\S_{\alpha}} = \diff x \otimes \diff x + \frac{\cos^2(x)}{\sin^{2\alpha}(x)} \diff y \otimes \diff y.
\end{equation}
A simple computation shows that the
 Riemannian volume induced from \cref{eq:SalphaRiemannianMetric}, is given by
\[
\vol_{\S_\alpha}=\cos(x)\abs{\sin(x)}^{-\alpha}\diff x\diff y.
\]
We introduce the following weighted measure.
\begin{definition}
\label{def:MeasurealphaGrushinSphere}
For $\beta\geq \alpha$,
we consider the weighted measure given by
\begin{equation*}
    \label{eq:MeasurealphaGrushinSphere}
    \m^\beta_{\S_\alpha}:=\abs{\sin(x)}^\beta\vol_{\S_\alpha}=\cos(x)\abs{\sin(x)}^{\beta-\alpha}\diff x \diff y=e^{-V_{\S_\alpha}}\vol_{\S_\alpha},
\end{equation*}
where $V_{\S_\alpha}(x,y):=-\beta\log\abs{\sin(x)}$.
\end{definition}

\noindent The $\alpha$-Grushin hemisphere is a geodesically convex subset of $\S_\alpha$ and a
geodesic space when seen as a length subspace of $\S_\alpha$. This is made clear by the next result.

\begin{prop}
    \label{prop:geodesicspacealphaGrushinSphere}
     There is a
     minimising geodesic contained within $\overline{\S}^+_\alpha$ that joins any two given points in $\overline{\S}^+_\alpha$. Furthermore, the $\alpha$-Grushin open hemisphere $\S_\alpha^+$ is a geodesically convex subset of $\overline{\S}^+_\alpha$ and has the structure of an incomplete weighted Riemannian manifold when equipped with the restriction of the measure $\mathfrak{m}^\beta_{\S_\alpha}$.
\end{prop}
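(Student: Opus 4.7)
The plan is to verify each of the three assertions in the proposition separately, using the symmetry of the structure together with Pontryagin's Maximum Principle.

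For the first assertion, I would rely on the involutive isometry $\sigma : (x,y) \mapsto (-x,y)$ of $\S_\alpha$, which fixes the equator and exchanges the two hemispheres. Both vector fields $X$ and $Y^\alpha$, and hence the Riemannian metric tensor \cref{eq:SalphaRiemannianMetric}, are invariant under $\sigma$ because $\abs{\sin(-x)}=\abs{\sin(x)}$ and $\cos(-x)=\cos(x)$. Given $p,q \in \overline{\S}^+_\alpha$, a minimising sub-Riemannian geodesic $\gamma : [0,1] \to \S_\alpha$ between them exists by compactness of $\Sbb^2$ and lower semicontinuity of length. Applying $\sigma$ to the sub-arcs of $\gamma$ lying in $\{x < 0\}$ produces a horizontal curve $\tilde\gamma \subseteq \overline{\S}^+_\alpha$ of the same length, which is therefore a minimising geodesic.

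For the geodesic convexity of $\S^+_\alpha$, I would invoke Pontryagin's Maximum Principle (\cref{theorem:PMP}) and analyse the Hamiltonian flow. The sub-Riemannian Hamiltonian
\[
H(x,y,p_x,p_y) = \frac{1}{2}\Big(p_x^2 + \frac{\abs{\sin(x)}^{2\alpha}}{\cos^2(x)}\,p_y^2\Big)
\]
is independent of $y$, so $p_y$ is conserved along normal extremals and the dynamics reduce to a planar system in $(x,p_x)$ parametrised by $p_y$ and the energy level $H$. Abnormal extremals can be disregarded, since in our two-dimensional almost-Riemannian setting they project onto points of the singular set $\{x=0\}$, which lies outside $\S^+_\alpha$. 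Between any two endpoints in $\S^+_\alpha$, one then chooses initial momenta $(p_x^{(0)},p_y)$ so that the corresponding normal extremal realises the sub-Riemannian distance while keeping its trajectory in $\{x>0\}$: extremals with $p_y = 0$ are meridians along $X=\partial_x$, whereas extremals with $p_y \neq 0$ oscillate between two symmetric turning points $\pm x^*$, and one can select initial data for which the oscillation stays on the positive side of the equator.

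The third assertion is immediate. On $\S^+_\alpha$ the vector field $Y^\alpha$ is nowhere vanishing, so the distribution has full rank and the sub-Riemannian structure reduces to the smooth Riemannian metric \cref{eq:SalphaRiemannianMetric}. The restriction of $\m^\beta_{\S_\alpha}$ has smooth positive density $\cos(x)\abs{\sin(x)}^{\beta-\alpha}$ relative to the Lebesgue measure. Incompleteness follows because, from any interior point $(x_0,y_0)$, the integral curve of $X=\partial_x$ reaches the excluded equator $\{x=0\}$ in finite time $x_0$.

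The main obstacle, I expect, will be the geodesic convexity step: the reflection trick of the first paragraph only produces minimisers that may touch the equator with corners, so ruling this out and exhibiting genuinely interior minimising normal extremals for arbitrary configurations of endpoints in $\S^+_\alpha$ requires a careful dynamical analysis of the turning points of the $(x,p_x)$ system.
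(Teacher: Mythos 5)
Your first and third assertions are handled essentially as in the paper: the reflection $(x,y)\mapsto(-x,y)$ is exactly the paper's device for producing a minimiser inside $\overline{\S}^+_\alpha$, and the Riemannian structure on $\S^+_\alpha$ is indeed immediate. (The paper is slightly more careful about existence of minimisers: since the bracket-generating condition fails for non-integer $\alpha$, it first checks that the metric topology agrees with the manifold topology via monotonicity of $\diff_{\S_\alpha}$ in $\alpha$, and it separately disposes of extremals reaching the poles $x=\pm\pi/2$, where the frame is undefined; your appeal to compactness and lower semicontinuity glosses over both points.)

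The genuine gap is the one you flag yourself: geodesic convexity of $\S^+_\alpha$. Your shooting strategy --- choosing initial momenta $(p_x^{(0)},p_y)$ so that the minimising extremal stays in $\{x>0\}$ --- is not carried out, and it is also aimed at the wrong target: the notion of geodesic convexity used later (condition (i) of \cref{theorem:equivalence}) requires that \emph{every} minimising geodesic between interior points stays in the interior, not merely that one such minimiser exists. Moreover your description of the dynamics is misleading: every normal extremal with $p_y\neq 0$ oscillates between symmetric turning points $\pm x^*$ and therefore \emph{does} cross the equator as a full trajectory; what must be shown is that a minimising \emph{segment} between interior points cannot reach $\{x=0\}$. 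The paper closes this with a short argument you should adopt: a minimiser is a smooth solution of Hamilton's equations \cref{eq:alphaGrushinsphereHamilton}; if it met the equator at an interior time transversally, reflecting the overshooting arc would produce a minimiser with a corner, contradicting smoothness, so any such contact is tangential, i.e.\ $x(t_0)=u(t_0)=0$; uniqueness for the ODE then forces $x\equiv 0$, hence $\dot y\equiv 0$ and the curve is constant. Thus no non-constant minimiser between points of $\S^+_\alpha$ touches the equator. Without this (or an equivalent) argument, the convexity claim --- and hence the application of \cref{theorem:equivalence} --- is unsupported.
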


\begin{proof}

We start by noting that between every two points on the $\alpha$-Grushin sphere, there is indeed a horizontal path controlled by the vector fields $X$ and $Y^{\alpha}$ joining them. This means that the induced sub-Riemannian distance $\diff_{\S_\alpha}$ is well-defined and that $(\S_\alpha, \diff_{\S_\alpha})$ is a locally compact metric space. Even though the bracket generating condition is not verified when $\alpha \notin \N$, it is easy to see that the metric topology still coincides with the original topology of $\S^2$ by the monotonicity property of $\mathrm{d}_{\S_\alpha}$ with respect to $\alpha \geq 0$. Furthermore, any metric ball is compact and thus the metric space $(\S_\alpha, \diff_{\S_\alpha})$ is complete.

The sub-Riemannian Hamiltonian $H : T^*(\S_\alpha) \to \R$ can be written in the canonical coordinates $(x, y, u,v )$ induced from $(x, y)$  as
\[
H(\lambda):=\frac{1}{2}\left[\langle \lambda,X\rangle^2+\langle\lambda,Y^\alpha\rangle^2\right]=\frac{1}{2}\left[u^2+\frac{\sin^{2\alpha}(x)}{\cos^2(x)}v^2\right].
\]
A normal extremal $\lambda : \interval{0}{T} \to T^*(\S_\alpha) : t \mapsto (x(t), y(t), u(t), v(t))$ satisfies the following Hamiltonian system of equations
\begin{equation}
    \label{eq:alphaGrushinsphereHamilton}
    \begin{cases}
    \dot{x}=u,\\
    \displaystyle \dot{y}=\frac{\sin^{2\alpha}(x)}{\cos^2(x)}v,\\
    \displaystyle \dot{u}= -  v^2 \  \sin^{2(\alpha - 1)}(x) \tan(x) (\alpha + \tan^2(x)),\\
    \dot{v}=0.
\end{cases}
\end{equation}
Here note that the extremal reaches to the undefined point $x=\pm \pi/2$ only if $v\equiv 0$ (this follows from the non-integrability of $\tan(x)$ near $x=\pm\pi/2$). In this case, a (Euclidean) large circle passing through the north pole becomes a length minimizing geodesic. By completeness, there is a sub-Riemannian geodesic between every two points of $\S_\alpha$ by \cite[Theorem 2.5.23]{burago-burago-sergei2001}. These are obtained from Hamilton's equation \cref{eq:alphaGrushinsphereHamilton} since there are no non-trivial abnormal geodesics.

If a horizontal path of $\Sbb_\alpha$ is contained in both $\overline{\S}^+_\alpha$ and $\overline{\S}^-_\alpha := \{ p \in \S^2 \mid x \in [-\pi/2,0] \}$,
then a reflection $(x, y) \mapsto (-x,y)$ of the part of path that is in $\overline{\S}^-_\alpha$ produces a curve contained in $\overline{\S}^+_\alpha$ with the same length. This shows that a geodesic between points in the $\alpha$-Grushin hemisphere $\overline{\S}^+_\alpha$ is contained within $\overline{\S}^+_\alpha$. Length-minimisers are smooth since they satisfy Hamilton's equation \cref{eq:alphaGrushinsphereHamilton}. Thus, a constant-speed minimising geodesic $\gamma(t) = (x(t), y(t))$ that touches the singular equator at a point other than its endpoints must do so tangentially, and \cref{eq:alphaGrushinsphereHamilton} implies that $x(t)$ vanishes for all $t$. Consequently, $y(t)$ also vanishes, and $\gamma$ becomes a constant curve. In particular, a minimising geodesic between points of $\S_\alpha^+$ is also contained within $\S_\alpha^+$.

The fact that $\S_\alpha^+$ is also an incomplete Riemannian manifold follows easily since it doesn't contain any singular points of $\S_\alpha$.
\end{proof}

\subsection{The \texorpdfstring{$\alpha$}{alpha}-Grushin hyperbolic plane and half-plane}\label{section:alphaGrushin2}

On the two-dimensional hyperbolic plane $(\bbH^2, g_{\bbH^2})$, we consider the following coordinate chart, called Lobachevsky's coordinates (see \cite[Section 33.1]{Martin-book}, for example).
We fix an infinite minimising geodesic ray $\gamma : \R \to \bbH^2$, and for $p \in \bbH^2$, we let $x := x(p) \in \R$ be signed hyperbolic distance $\diff_{\bbH^2}(p, \mathrm{Im}(\gamma))$,
where the signature is positive (resp. negative) if $p$ belongs to the left hand side (resp. right hand side) of $\gamma$. Furthermore, let $y=y(p)\in \R$ be the unique number such that $\gamma(y)$ is the perpendicular foot from $p$ to $\mathrm{Im}(\gamma)$. The map $\bbH^2 \to \R \times \R : p \mapsto (x(p), y(p))$ defines a global coordinate chart, and a short computation shows that the hyperbolic Riemannian metric $g_{\bbH^2}$ has the warped product structure
\[
g_{\bbH^2} = \diff x\otimes \diff x+\cosh^2(x)\diff y\otimes \diff y.
\]

\begin{definition}\label{def:hyperbolic}
    For $\alpha \geq 0$, the $\alpha$-Grushin hyperbolic plane $\bbH_\alpha$ is the sub-Riemannian structure on $\bbH^2$ induced from the vector field $X$ and $Y^\alpha$ given by
\[
X:=\partial_{x},~~Y^\alpha:=\frac{\abs{\sinh(x)}^{\alpha}}{\cosh(x)}\partial_{y}.
\]
The $\alpha$-Grushin hyperbolic half-plane $\overline{\bbH}^+_\alpha$ (resp. open half-plane $\bbH^+_\alpha$) is the subset of $\bbH_\alpha$ defined by
\[
\overline{\bbH}^+_\alpha := \{ p \in \bbH^2 \mid x \geq 0 \} \text{ (resp. $\bbH^+_\alpha := \{ p \in \bbH^2 \mid x > 0 \}$)}.
\]
\end{definition}
\noindent The $0$-Grushin hyperbolic plane is simply the two-dimensional Riemannian hyperbolic plane $\bbH^2$. When $\alpha > 0$, the $\alpha$-Grushin hyperbolic plane is a two-dimensional almost-Riemannian structure with $\{x = 0\}$ being its set of singular points.
To the best of our knowledge, this definition, although very natural, is new.
At non-singular points, this sub-Riemannian structure admits the Riemannian metric
\begin{equation}
    \label{eq:HalphaRiemannianMetric}
    g_{\bbH_{\alpha}} = \diff x \otimes \diff x + \frac{\cosh^2(x)}{\sinh^{2\alpha}(x)} \diff y \otimes \diff y.
\end{equation}
A simple computation shows that the Riemannian volume induced from \cref{eq:HalphaRiemannianMetric} is given by
\[
\vol_{\bbH_\alpha}=\cosh(x)\abs{\sinh(x)}^{-\alpha}\diff x\diff y.
\]
We introduce the following weighted measure.
\begin{definition}\label{def:hyperbolicmeasure}
For $\beta\geq \alpha$,
we consider the weighted measure given by
\begin{equation*}
    \label{eq:MeasurealphaGrushinHyperbolic}
    \m^\beta_{\bbH_\alpha}:=\abs{\sinh(x)}^\beta\vol_{\bbH_\alpha}=\cosh(x)\abs{\sinh(x)}^{\beta-\alpha}\diff x \diff y=e^{-V_{\bbH_\alpha}}\vol_{\bbH_\alpha},
\end{equation*}
where $V_{\bbH_\alpha}(x,y):=-\beta\log\abs{\sinh(x)}$.
\end{definition}

\noindent As for the previous section, the $\alpha$-Grushin hyperbolic half-plane is a geodesically convex subset of $\bbH_\alpha$ and is a
geodesic space when seen as a length subspace of $\bbH_\alpha$. The open half-plane $\bbH_\alpha^+$ is also a geodesically convex subset and it is an incomplete Riemannian manifold since it doesn't contain any singular points of $\bbH_\alpha$.

\begin{prop}
    \label{prop:geodesicspacealphaGrushinHyperbolic}
     There is a
     minimising geodesic contained within $\overline{\bbH}^+_\alpha$ that joins any two given points in $\overline{\bbH}^+_\alpha$. Furthermore, The $\alpha$-Grushin hyperbolic open half-plane $\bbH_\alpha^+$ is a geodesically convex subset of $\overline{\bbH}_\alpha^+$ and has the structure of an incomplete weighted
     Riemannian manifold when equipped with the restriction of the measure $\mathfrak{m}^\beta_{\bbH_\alpha}$.
\end{prop}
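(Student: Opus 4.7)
\begin{pr}
The plan is to mirror, step by step, the proof of \cref{prop:geodesicspacealphaGrushinSphere}, with the main adjustments coming from the non-compactness of $\bbH^2$. First I would verify that the sub-Riemannian distance $\diff_{\bbH_\alpha}$ is well defined by checking that the horizontal vector fields $X$ and $Y^\alpha$ admit a horizontal path between any two points (this is immediate away from $\{x = 0\}$, and a direct construction using a short vertical arc through $\{x=0\}$ handles the singular locus). A monotonicity argument in $\alpha$ then shows that the metric topology of $(\bbH_\alpha, \diff_{\bbH_\alpha})$ coincides with the underlying manifold topology, exactly as in the spherical case.

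Next, I would write Hamilton's system using canonical coordinates $(x,y,u,v)$ on $T^\ast \bbH_\alpha$, starting from
\[
H(\lambda) = \frac{1}{2}\left[u^2 + \frac{\sinh^{2\alpha}(x)}{\cosh^2(x)} v^2\right],
\]
whose associated ODE system is the hyperbolic analogue of \cref{eq:alphaGrushinsphereHamilton} (with $\sin$ and $\cos$ replaced by $\sinh$ and $\cosh$ and appropriate sign changes). The only abnormal extremals are constant curves on $\{x = 0\}$. This also shows that extremals are smooth and that a minimising geodesic which touches the singular line $\{x = 0\}$ away from its endpoints must be tangent to it at the contact point, whence $\dot x \equiv 0$ and then $\dot y \equiv 0$, forcing the curve to be constant.

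The symmetry $(x,y) \mapsto (-x,y)$ of $\bbH_\alpha$ preserves horizontal length, so reflecting the portion of any horizontal curve lying in $\{x \leq 0\}$ yields a horizontal curve of the same length contained in $\overline{\bbH}^+_\alpha$. Combined with the previous paragraph, this gives, for any pair of points in $\overline{\bbH}^+_\alpha$, a minimising geodesic inside $\overline{\bbH}^+_\alpha$, and geodesics between points of $\bbH_\alpha^+$ never cross $\{x = 0\}$, proving geodesic convexity of $\bbH_\alpha^+$. The Riemannian statement is immediate since $\bbH_\alpha^+$ lies in the regular region, where \cref{eq:HalphaRiemannianMetric} defines a smooth Riemannian tensor and $\m^\beta_{\bbH_\alpha}$ restricts to a smooth positive volume form; incompleteness is clear since Cauchy sequences approaching $\{x = 0\}$ stay Cauchy but do not converge in $\bbH_\alpha^+$.

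The main obstacle, compared to the spherical argument, is the existence of a length-minimiser between arbitrary points: on $\S_\alpha$ this followed for free from compactness of closed balls, but on the non-compact $\bbH_\alpha$ one must justify properness of $(\bbH_\alpha,\diff_{\bbH_\alpha})$ in order to invoke \cite[Theorem 2.5.23]{burago-burago-sergei2001}. My plan for this is to compare $\diff_{\bbH_\alpha}$ with the standard hyperbolic distance $\diff_{\bbH^2}$: the inequality $\diff_{\bbH_\alpha} \geq \diff_{\bbH^2}$ is evident since $g_{\bbH_\alpha} \geq g_{\bbH^2}$ in the Riemannian region (as $\cosh(x) \geq |\sinh(x)|^\alpha/\cosh(x)$ is not literally true for large $|x|$ when $\alpha>1$, so instead one bounds from below by a conformal multiple of $g_{\bbH^2}$ on compact regions in $x$), hence closed balls in $\diff_{\bbH_\alpha}$ are bounded in $\diff_{\bbH^2}$, and then bounded in $x$; together with closedness this yields compactness of balls and completeness, from which existence of geodesics between arbitrary points follows and the remainder of the proof goes through unchanged.
\end{pr}
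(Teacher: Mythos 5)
Your proposal is correct and follows essentially the same route as the paper, whose own proof simply writes down the hyperbolic Hamiltonian system and defers everything else to the spherical case (\cref{prop:geodesicspacealphaGrushinSphere}): reflection across $\{x=0\}$, smoothness of normal extremals forcing tangential contact to degenerate to a constant curve, and the absence of nontrivial abnormals. The one place you go beyond the paper is in flagging that properness of the non-compact $(\bbH_\alpha,\diff_{\bbH_\alpha})$ must be justified before invoking \cite[Theorem 2.5.23]{burago-burago-sergei2001}; this is a genuine point the paper glosses over, and your fix works, but as written it is slightly circular --- the conformal lower bound by $g_{\bbH^2}$ only holds on compact $x$-strips, so you should first obtain $x$-boundedness of metric balls directly from $\diff x\otimes\diff x\leq g_{\bbH_\alpha}$ (the $x$-coordinate is $1$-Lipschitz for $\diff_{\bbH_\alpha}$), and only then apply the comparison on the resulting strip to bound the $y$-displacement.
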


\begin{proof}
Here, the Hamiltonian and the corresponding Hamilton's equation are given in Lobachevsky's coordinates, by
\begin{equation*}
    \label{eq:alphaGrushinHyperbolicHamilton}
    H(\lambda)=\frac{1}{2}\left[u^2+\frac{\sinh^{2\alpha}(x)}{\cosh^2(x)}v^2\right], \text{ and } \begin{cases}
    \dot{x}=u,\\
    \displaystyle \dot{y}=\frac{\sinh^{2\alpha}(x)}{\cosh^2(x)}v,\\
    \displaystyle \dot{u}= - 2 v^2 \ \sinh^{2(\alpha - 1)}(x) \tanh(x) (\alpha - \tanh^2(x)),\\
    \dot{v}=0.
\end{cases}
\end{equation*}
The rest of the proof follows exactly the arguments of \cref{prop:geodesicspacealphaGrushinSphere}.
\end{proof}

\subsection{The \texorpdfstring{$\infty$}{infty}-Grushin plane and half-plane}\label{section:alphaGrushin3}

The geometry of the so-called $\alpha$-Grushin plane, where $\alpha \geq 0$, has been studied in \cite{li2012}, \cite{borza2022}, and \cite{borza2023}. The $\alpha$-Grushin half-plane and the validity of the $\cd$ condition in this space is studied in \cite{rizzi2023failure}, and we recalled some details in \cref{section:Introduction} . Instead, we introduce a model of a Grushin plane with infinite Hausdorff dimension. The global chart $(x, y)$ simply denotes the cartesian coordinates in this section.

\begin{definition}\label{def:infty}
    The $\infty$-Grushin plane $\mathbb{G}_\infty$ is the sub-Riemannian structure on $\R^2$ induced from the vector field $X$ and $Y$ given by
\[
X:=\partial_{x},~~Y:= e^{-1/\abs{x}}\partial_{y}.
\]
The $\infty$-Grushin half-plane $\overline{\mathbb{G}}^+_\infty$ (resp. open half-plane $\mathbb{G}^+_\infty$) is the subset of $\mathbb{G}_\infty$ defined by
\[
\overline{\mathbb{G}}^+_\infty := \{ p \in \R^2 \mid x \geq 0 \} \text{ (resp. $\mathbb{G}^+_\infty := \{ p \in \R^2 \mid x > 0 \}$)}.
\]
\end{definition}

\begin{lemma}
\label{lemma:infinitehausdorffdimension}
    The $\infty$-Grushin plane and half-plane have infinite Hausdorff dimension.
\end{lemma}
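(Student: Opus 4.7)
The plan is to show that the singular segment $\Sigma := \{0\} \times [0,1]$, which is contained in both $\G_\infty$ and $\overline{\G}_\infty^+$, has infinite Hausdorff dimension. Since the half-plane distance dominates the plane distance on $\overline{\G}_\infty^+$, it is enough to work in $\G_\infty$: any lower bound for $\mathcal{H}^s(\Sigma)$ in $\G_\infty$ transfers to $\overline{\G}_\infty^+$, and both ambient spaces contain $\Sigma$ as a subset.

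My first step is to establish a lower bound for the sub-Riemannian distance between two singular points $(0, y_1), (0, y_2) \in \Sigma$. Let $\gamma : [0, L] \to \G_\infty$ be any arc-length parametrised horizontal curve joining them, so that its controls $(u, v)$ satisfy $u^2 + v^2 = 1$ almost everywhere. From $x(0) = 0$ and $|\dot x(\tau)| = |u(\tau)| \leq 1$, the first coordinate obeys $|x(\tau)| \leq \tau \leq L$ throughout. Using $|\dot y(\tau)| = |v(\tau)|\, e^{-1/|x(\tau)|} \leq e^{-1/|x(\tau)|}$, with the convention $e^{-1/0} := 0$, integration yields
\[
|y_2 - y_1| \;\leq\; \int_0^L e^{-1/|x(\tau)|} \, d\tau \;\leq\; L \, e^{-1/L}.
\]
Consequently, any two points $p, q \in \Sigma$ with $\di_{\G_\infty}(p, q) \leq \delta$ have $y$-components differing by at most $\delta \, e^{-1/\delta}$.

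The second step converts this into a lower bound on $\mathcal{H}^s(\Sigma)$ for arbitrary $s > 0$. Let $\{U_i\}_i$ be any cover of $\Sigma$ with $\delta_i := \diam(U_i) \leq \delta$. By the first step the Euclidean $y$-extent of $U_i \cap \Sigma$ is at most $\ell_i := \delta_i \, e^{-1/\delta_i}$, and the covering property gives $\sum_i \ell_i \geq 1$. The inequality $\ell_i \leq \delta_i \, e^{-1/\delta_i}$ can be rearranged as $\delta_i^s \geq \ell_i \cdot \phi(\delta_i)$, where $\phi(t) := t^{s-1} e^{1/t}$. A direct analysis shows that $\phi$ is decreasing on $(0, \delta]$ once $\delta$ is small enough (depending on $s$), whence
\[
\sum_i \delta_i^s \;\geq\; \phi(\delta) \sum_i \ell_i \;\geq\; \delta^{s-1} e^{1/\delta},
\]
and the right-hand side blows up as $\delta \to 0^+$. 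Passing to the infimum over covers, $\mathcal{H}^s_\delta(\Sigma) \geq \delta^{s-1} e^{1/\delta} \to +\infty$, so $\mathcal{H}^s(\Sigma) = +\infty$ for every $s > 0$ and $\dim_H \Sigma = +\infty$.

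The main technical point will be the distance estimate itself, because $Y = e^{-1/|x|} \partial_y$ vanishes on $\{x = 0\}$ and the Riemannian metric tensor degenerates there. One has to argue even for horizontal curves that may repeatedly cross the singular line, and the arc-length parametrisation neatly bypasses the issue: the inequality $|x(\tau)| \leq \tau$ is purely kinematic and never requires inverting the metric tensor, so the bound on $|y_2 - y_1|$ holds uniformly in the curve.
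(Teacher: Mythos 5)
Your proof is correct, but it takes a genuinely different route from the paper's. The paper argues by comparison with the family of $\alpha$-Grushin planes: since $e^{-1/\abs{x}}\leq\abs{x}^{\alpha}$ near the singular line, the $\infty$-Grushin distance dominates the $\alpha$-Grushin distance there, hence $\dim_H(\Scal,\di_{\G_\infty})\geq\dim_H(\Scal,\di_{\G_\alpha})=\alpha+1$ for every $\alpha\geq 0$, the last equality being quoted from the literature (\cite{Franchi1983}), and letting $\alpha\to+\infty$ finishes the argument. Your argument is instead self-contained and quantitative: the kinematic bound $\abs{x(\tau)}\leq\tau$ for arc-length-parametrised horizontal curves issued from the singular line yields $\abs{y_2-y_1}\leq L\,e^{-1/L}$, and the covering estimate $\sum_i\delta_i^s\geq\delta^{s-1}e^{1/\delta}\to+\infty$ then disposes of every finite exponent $s$ at once. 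What the paper's route buys is brevity (two lines plus a citation); what yours buys is independence from the external fact about the Hausdorff dimension of the singular line of $\G_\alpha$ --- your Step 1 is essentially the standard proof of that fact adapted to the exponential weight --- together with an explicit lower bound on the rate at which $\mathcal{H}^s_\delta(\Sigma)$ diverges. Two small points to make explicit in a final write-up: (a) covering sets that are singletons ($\delta_i=0$) contribute nothing to the $y$-extent and should be discarded before invoking the monotonicity of $\phi$; (b) the passage from $\di_{\G_\infty}(p,q)\leq\delta$ to the displacement bound should go through near-minimising curves of length $L\leq\di_{\G_\infty}(p,q)+\epsilon$ and use that $L\mapsto L\,e^{-1/L}$ is increasing --- you do this implicitly, but it is the one place where the infimum in the definition of the distance enters.
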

\begin{proof}
We will show that the Hausdorff dimension of $\mathcal{S}:=\{(0,y)\mid y\in\R\}\subseteq \G_\infty$ is $+\infty$. Let us denote by $\di_\alpha$ (resp. $\di_\infty$) the induced distance on $\G_\alpha$ (resp. $\G_\infty$).
It is well-known that the Hausdorff dimension of $\Scal\subseteq \G_\alpha$ is $\alpha+1$, see e.g. \cite{Franchi1983}. Since the inequality $|x|^\alpha\geq e^{-1/|x|}$ holds for sufficiently small $|x|$,
we have the inequality $\di_\alpha\geq \di_\infty$ in a small neighbourhood of an arbitrary point in $\Scal$. This implies that $\dim_H(\Scal,\di_\alpha)\leq \dim_H(\Scal,\di_\infty)$ and concludes the lemma.
\end{proof}

The $\infty$-Grushin plane is a two-dimensional almost-Riemannian structure with two-dimensional almost-Riemannian structure with $\{x = 0\}$ being its set of singular points.
To the best of our knowledge, this definition of $\infty$-Grushin plane is also new.
At non-singular points, this sub-Riemannian structure admits the Riemannian metric
\begin{equation}
    \label{eq:GinftyRiemannianMetric}
    g_{\mathbb{G}_{\infty}} = \diff x \otimes \diff x + e^{2/\abs{x}} \diff y \otimes \diff y.
\end{equation}
A simple computation shows that the Riemannian volume induced from \cref{eq:GinftyRiemannianMetric} is given by
\[
\vol_{\mathbb{G}_\infty}=e^{1/\abs{x}}\diff x\diff y.
\]
We introduce the following weighted measure.
\begin{definition}\label{def:infty_measure}
For
$\beta\geq 0$ and $\gamma>0$
,
we consider the weighted
(Radon)
measure given by
\begin{equation*}
    \label{eq:MeasurealphaGrushininfty}
    \m^{\beta, \gamma}_{\mathbb{G}_{\infty}}:=\abs{x}^\beta e^{-\gamma/x^2}\vol_{\mathbb{G}_{\infty}}=\abs{x}^\beta e^{-\gamma/x^2 + 1/\abs{x}}\diff x\diff y=e^{-V}\vol_{\mathbb{G}_{\infty}},
\end{equation*}
where $V_{\G_\infty}(x,y):=\frac{\gamma}{x^2}-\beta\log\abs{x}$.
\end{definition}

The next result is analogous to the corresponding one in the previous two sections. The $\infty$-Grushin half-plane is a geodesically convex subset of $\mathbb{G}_\infty$ and a geodesic space when seen as a length subspace of $\mathbb{G}_\infty$. Similarly, the open half-plane $\mathbb{G}_\infty^+$ is also a geodesically convex subset and it is an incomplete Riemannian manifold since it doesn't contain any singular points of $\mathbb{G}_\infty$.

\begin{prop}
     There is a minimising geodesic contained within $\overline{\mathbb{G}}^+_\infty$ that joins any two given points in $\overline{\mathbb{G}}^+_\infty$. Furthermore, the $\infty$-Grushin open half-plane $\mathbb{G}_\infty^+$ is a geodesically convex subset of $\overline{\mathbb{G}}_\infty^+$ and has the structure of an incomplete weighted Riemannian manifold when equipped with the restriction of the measure $\mathfrak{m}^{\beta, \gamma}_{\mathbb{G}_\infty}$.
\end{prop}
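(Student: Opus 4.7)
The plan is to mimic, step by step, the proof of \cref{prop:geodesicspacealphaGrushinSphere} (and its hyperbolic analogue \cref{prop:geodesicspacealphaGrushinHyperbolic}), adapting each argument to the peculiar generating family $\{X, Y\}$ of \cref{def:infty}. First I would check that the construction of \cref{subsection:subR} still produces a well-defined metric on $\G_\infty$. Although $e^{-1/|x|}$ is not bracket-generating at $\{x=0\}$ (all its derivatives vanish there), the extension of $e^{-1/|x|}$ by $0$ at the origin is $C^\infty$, so $X,Y$ are globally smooth and, more importantly, every pair of points can be joined by a horizontal path: from any point move first along $\partial_x$ to enter $\{x>0\}$, translate freely in $y$ using $Y$, then move back in $x$. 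A monotonicity comparison with the metric of the classical $\alpha$-Grushin plane (using $|x|^\alpha \geq e^{-1/|x|}$ for small $|x|$, exactly as in \cref{lemma:infinitehausdorffdimension}) shows that $\diff_\infty$ induces the standard topology on $\R^2$ and that closed metric balls are compact, hence $(\G_\infty,\diff_\infty)$ is complete and locally compact.

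Next, I would write down the Hamiltonian in canonical coordinates,
\[
H(\lambda)=\tfrac{1}{2}\bigl[u^{2}+e^{-2/|x|}v^{2}\bigr],
\]
and the corresponding Hamilton system
\[
\dot x = u,\quad \dot y = e^{-2/|x|}v,\quad \dot u = -\,\frac{v^{2}}{x\,|x|}\,e^{-2/|x|},\quad \dot v = 0,
\]
valid for $x\neq 0$. Since $H$ is smooth and locally Lipschitz away from $\{x=0\}$, extremals with $v\neq 0$ cannot cross the singular set tangentially without being constant: if a smooth minimiser $\gamma=(x(t),y(t))$ touches $\{x=0\}$ at an interior time, the endpoint condition and Hamilton's equations force $v=0$, hence $\dot y \equiv 0$ and $y$ is constant, so only ``vertical'' straight pieces of the singular line can be traced; this rules out nontrivial abnormal phenomena, exactly as in the proofs of \cref{prop:geodesicspacealphaGrushinSphere,prop:geodesicspacealphaGrushinHyperbolic}.

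Geodesic convexity of $\overline{\G}^+_\infty$ then follows from the reflection symmetry $(x,y)\mapsto(-x,y)$, which is an isometry of $\G_\infty$ because the Riemannian metric \cref{eq:GinftyRiemannianMetric} depends only on $|x|$. Indeed, if a minimiser between two points of $\overline{\G}^+_\infty$ wandered into $\{x<0\}$, reflecting its negative portion would produce a curve in $\overline{\G}^+_\infty$ of the same length, and the argument of the previous paragraph then shows that, after reflection, the curve may be taken to avoid $\{x=0\}$ except at its endpoints, unless it is constant. The same reasoning upgrades this to geodesic convexity of the open half-plane $\G^+_\infty$: any minimiser between two points of $\G^+_\infty$ must remain in $\{x>0\}$, for otherwise Hamilton's equations would force it to be constant.

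Finally, on $\G^+_\infty$ the vector fields $X$ and $Y$ are linearly independent everywhere, so the sub-Riemannian structure restricts to a genuine Riemannian structure with metric \cref{eq:GinftyRiemannianMetric}; the weighted measure $\m^{\beta,\gamma}_{\G_\infty}$ restricts to the smooth density $e^{-V_{\G_\infty}}\vol_{\G_\infty}$ on $\G^+_\infty$, giving it the structure of a weighted Riemannian manifold, incomplete because the boundary $\{x=0\}$ lies at finite distance. I expect the main technical point to be the verification that a minimiser cannot linger on the singular set; the doubly-flat character of $e^{-1/|x|}$ at $0$ (all derivatives vanish) makes the Hamiltonian vector field degenerate there, and one must argue carefully via the conserved momentum $v$ that tangential contact forces triviality, but this is essentially the same mechanism used in the previous two propositions.
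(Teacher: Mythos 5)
Your proposal follows the paper's proof exactly: the paper merely states that the argument is analogous to the hemisphere and hyperbolic cases and records the Hamiltonian and Hamilton's equations, and you carry out precisely those steps (horizontal connectivity, comparison with the $\alpha$-Grushin metric for topology and completeness, the reflection $(x,y)\mapsto(-x,y)$, and triviality of minimisers touching $\{x=0\}$ tangentially at an interior time). One small wording slip: tangential contact with the singular set does not force $v=0$; rather $x(t_0)=0$ and $u(t_0)=0$ force $(x,u)\equiv(0,0)$ by uniqueness for the smoothly extended Hamiltonian flow, whence $\dot y = e^{-2/\abs{x}}v \equiv 0$ and the curve is constant --- which is the conclusion you reach anyway.
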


\begin{proof}
The argument is again analogous to the proofs of \cref{prop:geodesicspacealphaGrushinSphere} and \cref{prop:geodesicspacealphaGrushinHyperbolic}, with the Hamiltonian and Hamilton's equation given in cartesian coordinates by
\begin{equation*}
    \label{eq:alphaGrushinInfHamilton}
    H(\lambda)=\frac{1}{2}\left[u+e^{-2/\abs{x}}v^2\right], \text{ and } \begin{cases}
    \dot{x}=u,\\
    \displaystyle \dot{y}=e^{-2/\abs{x}}v,\\
    \displaystyle \dot{u}= - v^2 \frac{e^{-2/\abs{x}}}{\abs{x}^2} x,\\
    \dot{v}=0.
\end{cases}
\end{equation*}
\end{proof}

Hereafter, we will collectively refer to the $\alpha$-Grushin plane (resp. half-plane), the $\infty$-Grushin plane (resp. half-plane), the $\alpha$-Grushin sphere (resp. hemisphere), and the $\alpha$-Grushin hyperbolic plane (resp. half-plane) as the $\alpha$-Grushin spaces (resp. half-spaces).

\section{Equivalence between \texorpdfstring{$\cd(K,N)$}{CD(K,N)} and \texorpdfstring{$\Ric_N\geq K$}{RicN>=K} for some almost Riemannian manifolds}
\label{section:equivalence}

The $\alpha$-Grushin spaces and half-spaces introduced in \cref{section:themodels} are metric measure spaces $(\X, \di, \mathfrak{m})$ with a weighted Riemannian manifold $(M, \diff_g, e^{-V}\vol_g)$ as their interior. For $N \in (-\infty,0)\cup[n,+\infty]$
, we recall that the $N$-Ricci tensor of an $n$-dimensional weighted Riemannian $(M, \diff_g, e^{-V} \vol_g)$ is defined by
\begin{equation}
    \label{eq:NRicci}
    \Ric_{N,V}:= \begin{cases}
        \Ric & \text{ if } N=n, \\
        \Ric + \hes(V) & \text{ if } N=+\infty,\\
        \Ric+\hes(V)-\dfrac{\diff V\otimes\diff V}{N-n} & \text{ otherwise},
    \end{cases}
\end{equation}
with the convention that $V$ must be constant when $N=n$.

The following theorem provides sufficient conditions under which the differential condition $\Ric_{N,V} \geq K$ on $M$ is equivalent to $(\X, \di, \mathfrak{m})$ satisfying the $\mathsf{CD}(K, N)$ condition. The proof generalises the sketch found in \cite[Section 3.5]{rizzi2023failure}, which is specific to the $\alpha$-Grushin half-plane. This theorem is, to some extent, related to the conjecture stated in \cite{han2020}.
\begin{theorem}
    \label{theorem:equivalence}
    Let $K \in \R$, $N \in \ointerval{-\infty}{0} \cup \interval{n}{+\infty}$, $(\X, \di, \mathfrak{m})$ be a metric measure space and $M$ be an open subset of $\X$ such that
	\begin{enumerate}[label=\normalfont(\roman*)]
	\item $M$ is a geodesically convex
    subset of $(\X, \di)$, i.e. for every $x,y\in M$, there is a geodesic joining $x$ and $y$ and any such curve is contained in $M$,
	\item $(M, \di|_M, \mathfrak{m}|_M)$ possesses a weighted $n$-dimensional Riemannian manifold structure,
	\item $\mathfrak{m}(\X \setminus M) = 0$.
	\end{enumerate}
	Then, the metric measure space $(\X, \di, \mathfrak{m})$ satisfies the $\cd(K,N)$ condition if and only if $\mathrm{Ric}_{N,V} \geq K$ on $M$.
\end{theorem}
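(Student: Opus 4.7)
The plan is to reduce both implications to the classical equivalence between $\cd(K,N)$ and $\mathrm{Ric}_{N,V}\geq K$ on a \emph{complete} weighted Riemannian manifold, established by Sturm and Lott--Villani for $N\in[n,+\infty]$ and by Ohta for $N<0$, and then to argue that completeness is not needed here because of assumptions (i)--(iii). The crucial preliminary observation is that, under these three assumptions, the $\cd(K,N)$ condition on $(\X,\di,\m)$ is genuinely a condition on the Riemannian interior $M$ alone.

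More precisely, I would first argue that for any absolutely continuous $\mu_0,\mu_1\in\Prob_2(\X,\m)$, every $W_2$-geodesic $(\mu_t)_{t\in[0,1]}$ connecting them and every optimal plan $\pi$ are concentrated in $M$. Indeed, (iii) forces $\mu_0,\mu_1$ to be supported in $M$, and (i) forces every minimising geodesic of $(\X,\di)$ between points of $M$ to remain in $M$, so the evaluations $(e_t)_\sharp\mathbf{\pi}$ of any optimal dynamical plan $\mathbf{\pi}$ representing $(\mu_t)$ remain in $M$. Combined with (ii), this says the distortion inequality of \cref{def.2.1} (or its negative-$N$ analogue) need only be checked using geodesics and densities of the weighted Riemannian manifold $(M,\di_g,e^{-V}\vol_g)$, where $\di_g=\di|_M$. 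For the implication ($\Leftarrow$), one then runs the standard Jacobi field/Bochner computation of Cordero-Erausquin--McCann--Schmuckenschl\"ager on $M$: McCann's theorem on Riemannian manifolds produces an optimal transport map $T_t=\exp(t\nabla\varphi)$ between any $\mu_0,\mu_1\ll\vol_g$, and the bound $\mathrm{Ric}_{N,V}\geq K$ translates, via Jacobi field comparison, into the pointwise distortion inequality on the Jacobian of $T_t$, whose integrated form is exactly the $\tau_{K,N}^{(t)}$ (resp.~$\tilde\tau_{K,N}^{(t)}$) inequality. For ($\Rightarrow$), I would localise at an arbitrary $p\in M$: testing $\cd(K,N)$ against pairs of smooth, compactly supported probability measures in small geodesic balls $B_r(p)\subset M$ and letting $r\downarrow 0$ in the direction of a given $v\in T_pM$ recovers the infinitesimal Bishop inequality $\mathrm{Ric}_{N,V}(v,v)\geq K|v|_g^2$, as in \cite{sturm2006, Ohta-negativeCD}.

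The main obstacle is handling the fact that $(M,\di_g)$ is typically \emph{incomplete} in our examples, so the classical theorems cannot be cited as a black box. The point is that incompleteness is absorbed by (i): since $M$ is geodesically convex inside the \emph{complete} length space $(\X,\di)$, every pair of points in $M$ is joined by a minimising geodesic that stays in $M$, which is precisely the property completeness would have given us. Consequently, the optimal transport maps and the Jacobi field computations performed on $M$ never require escaping $M$, and neither direction of the Sturm--Lott--Villani--Ohta equivalence uses completeness beyond this. One additional point to verify is that the essentially non-branching assumption on $(\X,\di,\m)$ descends to uniqueness of optimal dynamical plans on $M$ (which is automatic on a Riemannian manifold), ensuring the interpolation is well-defined and the classical proof applies verbatim.
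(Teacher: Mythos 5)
Your overall strategy coincides with the paper's: the forward implication is the localisation argument of \cite[Theorem 17.36]{villani}/\cite{sturm2006}, and the converse is meant to follow by pushing the whole transport problem into the Riemannian interior $M$ and invoking the smooth equivalence there. However, there is a genuine gap in the converse direction, starting from your first claim that ``(iii) forces $\mu_0,\mu_1$ to be supported in $M$''. Assumption (iii) only gives $\mu_i(\X\setminus M)=0$, i.e.\ the measures are \emph{concentrated} on $M$; their \emph{supports} are closed sets and can perfectly well meet $\X\setminus M$ (take a continuous density that is positive up to the boundary of the Grushin hemisphere). Consequently you cannot treat $\mu_0,\mu_1$ as compactly supported measures on the incomplete manifold $M$ and apply McCann's theorem and the Cordero-Erausquin--McCann--Schmuckenschl\"ager computation ``verbatim'': that machinery is available for measures supported on a compact set whose geodesic hull stays inside $M$, not for measures whose mass accumulates at the degenerate boundary.

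Two further steps are needed, and they are where the real work of the paper lies. First, geodesic convexity of $M$ by itself only says that a geodesic between two points of $M$ lies in $M$; it does not prevent geodesics between points of a compact $K\subset M$ from coming arbitrarily close to $\partial M$. One must show that the closure of the geodesic hull of a compact subset of $M$ is still a compact subset of $M$ (the paper's Step 1, via Arzel\`a--Ascoli and the fact that a uniform limit of geodesics is a geodesic, which would have to leave $M$ otherwise). Only then can one apply \cite[Theorem 13]{mccann} and the smooth CD inequality to measures supported on the truncations $N_k=\{x\in M: \di(x_0,x)\le k,\ \di(x,\X\setminus M)\ge 1/k\}$. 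Second, to pass from such measures to arbitrary absolutely continuous $\mu_0,\mu_1\in\Prob_2(\X)$ one needs a genuine approximation and stability argument: the paper realises $N_k\to\X$ in the pointed measured Gromov--Hausdorff sense, approximates $\mu_i$ by measures supported on $N_k$, and uses weak convergence of the optimal plans together with lower semicontinuity of the entropy functionals (\cite[Theorems 28.9, 28.13, 29.21, 29.24]{villani}, and the pointed $\mathsf{iKRW}$ convergence of \cite{chiara-mattia-sosa2023} when $N<0$) to pass the displacement convexity inequality to the limit. Your proposal acknowledges the incompleteness issue but asserts it is ``absorbed by (i)''; it is not --- assumptions (i) and (iii) make the reduction \emph{possible}, but the compactness-of-geodesic-hulls lemma and the limiting argument are indispensable and absent from your sketch.
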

\begin{proof}
   We provide the proof only for $N \in \interval{n}{+\infty}$. The details for $N \in \ointerval{-\infty}{0}$ are exactly the same, but one needs to replace the relevant key results used in the proof by the analogous ones when $N$ is negative. In particular, the Gromov--Hausdorff convergence which we use below must be replaced by the pointed $\mathsf{iKRW}$ convergence described in \cite{chiara-mattia-sosa2023}.

   Firstly, we note that the $\cd(K,N)$ condition on $(\X, \di, \mathsf{m})$ directly implies that $\mathrm{Ric}_{N,V} \geq K$ on $M$ by \cite[Theorem 17.36]{villani} (see also \cite[Part (e) of the proof of Theorem 1.7]{sturm2006}). We therefore focus now on the other implication.

	For a closed metric subspace $A\subseteq \X$, we shall denote by $W_2^A$ the Wasserstein distance on $A$. Consider $\mu_0, \mu_1 \in \mathscr{P}_2^{\mathrm{ac}}(\X, \di, \mathfrak{m})$ with continuous densities $\rho_0$ and $\rho_1$ respectively. We need to argue that there exists a $W_2^\X$-geodesic $(\mu_s)_{s \in \interval{0}{1}}$ joining $\mu_0$ to $\mu_1$ with $\mu_s = (e_s)_\sharp\nu$ for some $\nu \in \mathscr{P}(\mathrm{Geo}(\X, \di))$ such that the inequality in \cref{def.2.1} is satisfied.
    For all $k \in \N$ and given an arbitrary $x_0 \in \X$, we define the sets
\[
M_k:=\mathrm{cl}(\mathrm{geo}(N_k)), \text{ and } \ \ N_k := \{x\in M\mid k \geq \di(x_0,x), \text{ and }\di(x,\X\setminus M)\geq 1/k\},
\]
where $\mathrm{geo}(A)$ is the geodesic hull of a subset $A \subseteq \X$, i.e. the union of all geodesics starting at $x \in A$ and ending at $y \in A$. The set $N_k$ is clearly closed and bounded. Since $(\X, \diff, \mathfrak{m})$ is a complete metric measure space, the Heine--Borel property (see \cite[Theorem 2.5.28]{burago-burago-sergei2001}) implies that $N_k$ is also compact in $\X$.

    \paragraph{Step 1. The closure of the geodesic hull of a compact subset of $M$ is in $M$.}

    Let $K$ be a compact set of $\X$ contained in $M$. We claim that $\mathrm{cl}(\mathrm{geo}(K))$ is contained within $M$. Since $M$ is assumed to be geodesically convex in $\X$, we note that their closure in $\X$ satisfies $\mathrm{cl}(\mathrm{geo}(K)) \subseteq \mathrm{cl}(M)$. Because $M$ is open in $\X$, we also have that $\diff(x, \partial M) \geq \diff(K, \partial M) > 0$ for all $x \in K$. Suppose by contradiction that there exists $p \in \mathrm{cl}(\mathrm{geo}(K))$ such that $p \in \partial M$. This means that for all $n \in \N$, there exist $x_n, y_n \in K$, a geodesic $\gamma_n$ joining $x_n$ to $y_n$ contained in $M$, and a point $p_n \in M$ lying on $\gamma_n$ such that $p_n \to p$ as $n \to +\infty$. By compactness, we have, after extracting a converging subsequence, that $x_n \to x$ and $y_n \to y$ for some $x, y \in K$. In particular, the geodesics $\gamma_n$ have uniformly bounded lengths. Since $(\X, \diff)$ is a complete locally compact length space, the Arzela-Ascoli theorem from \cite[Theorem 2.5.14]{burago-burago-sergei2001} implies that the sequence of curves $\gamma_n$ contains a uniformly converging subsequence. This limit, which we denote by $\gamma$, is a curve of $\X$ with endpoints $x$ and $y$ and passing through $p$. Actually, the curve $\gamma$ is a minimising geodesics between $x$ and $y$ by \cite[Proposition 2.5.17]{burago-burago-sergei2001}. By geodesic convexity of $M$ assumed at (ii), the curve $\gamma$ is contained in $M$ while the point $p \notin M$, leading to a contradiction.


    \paragraph{Step 2. The support of $\mu_0$ and $\mu_1$ are compact and contained in $N_k$.}
    Assume that $\supp(\mu_0)$ and $\supp(\mu_1)$ are compact and that $\supp(\mu_0) \cup \supp(\mu_1) \subseteq N_k$ for some $k \in \N$. The previous step applied to the compact $N_k$ implies that each pair of points $x \in \supp(\mu_0)$ and $y \in \supp(\mu_1)$ are connected by a geodesic contained in $M$. By \cite[Theorem 13]{mccann}, there exists a unique $W_2^{M_k}$-geodesic $(\mu_s)_{s \in \interval{0}{1}}$ joining $\mu_0$ to $\mu_1$. By \cite[Proposition 2.10]{lott--villani2009}
    , we also know that there exists $\nu \in \mathscr{P}(\mathrm{Geo}(\X, \di))$ such that $\mu_s = (e_s)_\sharp \nu$ for all $s \in \interval{0}{1}$ and $\mathrm{supp}(\nu) \subseteq \Gamma := (e_0 \times e_1)^{-1}(\mathrm{supp}(\mu_0) \times \mathrm{supp}(\mu_1))$. A geodesic in $\gamma \in \Gamma$ joins points in $\mathrm{supp}(\mu_0)$ with points in $\mathrm{supp}(\mu_1)$ and is contained in $M$ thanks to Step 1. Since $\mathrm{dim}(M) \leq N$ and $\mathrm{Ric}_{V}^N \geq K$ on $(M, g)$, \cite[Theorem\, 17.36]{villani} (or also \cite[Parts (a), (b), (c), and (d) of the proof of Theorem 1.7]{sturm2006}) implies that the inequality in \cref{def.2.1} for all $N' \geq N$.
    \paragraph{Step 3. The support of $\mu_0$ and $\mu_1$ are compact and contained in $\X$.}
    We can assume without loss of generality that $\X \setminus M \subseteq \partial M$. Indeed, when it comes to the $\mathsf{CD}(K, N)$ condition or the Gromov--Hausdorff convergence that we are going to discuss in this step, only the support of the measure matters (see \cite[Remark 3.1]{gigli2015}).
    The definition of $N_k$ and assumption (iii) imply that $N_k \to \X$ as $k \to +\infty$ in the pointed measured Gromov--Hausdorff convergence (see \cite[Definition 3.9]{gigli2015}), when taking the inclusion map $\iota_k:N_k\to \X$ as approximation maps.
Although $N_k$ is not a geodesically convex subspace, we have shown in Step 1 that $M_k := \mathrm{cl}(\mathrm{geo}(N_k)) \subseteq M$.
Then, an argument similar to the proof of \cite[Theorem 28.13]{villani} shows that the metric space $(\prob(N_k),W_2^{N_k})$ converges to $(\prob(\X),W_2^\X)$ in the geodesic local Gromov--Hausdorff topology, via the inclusion map $(\iota_k)_\#$. Hence there are sequences of probability measures $(\mu_i^k)_{k\in\N}$ $(i=0,1)$, supported on $N_k$, such that their pushforwards $(\iota_k)_\#\mu_i^k$ converges to $\mu_i$ in $(\prob_2(\X),W_2^\X)$.
    Let $(\mu_s^k)_{s \in \interval{0}{1}}$ be the Wasserstein geodesic joining $\mu_0^k$ and $\mu_1^k$ and $\nu_k$ be the optimal transport plan joining $\mu_0^k$ to $\mu_1^k$ in $M_k$. By Step 3, the inequality in \cref{def.2.1} holds
    along $(\mu_s^k)_{s \in \interval{0}{1}}$ for all $N' \geq N$. Furthermore, by \cite[Theorem 28.9 and Exercise 28.15]{villani}, $\nu_k$ weakly converges to an optimal transport plan $\nu$ in $\X$ joining $\mu_0$ to $\mu_1$, and the $W_2^{M_k}$-geodesics $(\mu_s^k)_{s \in \interval{0}{1}}$ uniformly converges to a $W_2^{\X}$-geodesic $(\mu_s)_{s \in \interval{0}{1}}$ in $\X$ joining $\mu_0$ to $\mu_1$. By the same argument as in \cite[Theorem\, 29.24 and 29.21]{villani}, we conclude that the inequality in \cref{def.2.1} remains valid
    along $(\mu_s)_{s \in \interval{0}{1}}$ for all $N' \geq N$, by lower semicontinuity.

    \paragraph{Step 4. The general case.} The case where the support of $\mu_0$ and $\mu_1$ are not necessarily compact is obtained by the previous step and by using an exhaustion by compact sets, see \cite[Appendix E]{lott--villani2009}.
\end{proof}

As for the $\mathsf{RCD}(K, N)$ condition (for $K \in \R$ and $N \in \interval{1}{+\infty}$), it is enough for our purposes to use the result of \cite[Theorem 1.2]{ledonne-lucic-pasqualetto2023}, which shows that
sub-Riemannian manifolds equipped with a non-negative Radon measure are infinitesimally Hilbertian. Although the results in \cite{ledonne-lucic-pasqualetto2023} assume the bracket-generating condition and our $\alpha$-Grushin half-spaces do not when $\alpha \notin \N$, their Finsler approximation techniques can still be applied.
It is sufficient to regard the $\alpha$-Grushin half-spaces as $\alpha$-Grushin (full) spaces equipped with a measure supported on the half-space, for the following theorem to follow directly.
\begin{theorem}
     For $K \in \R$ and $N \in \interval{1}{+\infty}$, the $\cd(K,N)$ condition is equivalent to the $\RCD(K,N)$ condition for the $\alpha$-Grushin half-spaces.
\end{theorem}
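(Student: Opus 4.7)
\begin{pr}
The implication $\RCD(K,N) \Rightarrow \cd(K,N)$ is immediate from the definition of $\RCD(K,N)$, so the whole content of the statement lies in showing that every $\alpha$-Grushin half-space satisfying $\cd(K,N)$ is automatically infinitesimally Hilbertian. My plan is to reduce this to an application of \cite[Theorem 1.2]{ledonne-lucic-pasqualetto2023}, handling the two mismatches between our setting and theirs: (a) we work with half-spaces equipped with a measure vanishing on the boundary, rather than with the ``full'' sub-Riemannian manifold, and (b) the generating frames of the $\alpha$-Grushin structures fail to be smooth (and fail to bracket-generate) when $\alpha\notin \N$.

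To deal with (a), I would first embed each half-space $(\overline{\X}^+_\alpha, \di, \m)$ isometrically into the corresponding full $\alpha$-Grushin space $(\X_\alpha, \di_{\X_\alpha})$, equipping the latter with the Radon measure $\tilde{\m}$ obtained by extending $\m$ by zero outside the half-space. Because the singular set has $\tilde{\m}$-measure zero and $\overline{\X}^+_\alpha$ is geodesically convex in $\X_\alpha$ (as shown in \cref{prop:geodesicspacealphaGrushinSphere}, \cref{prop:geodesicspacealphaGrushinHyperbolic} and its analog for $\G_\infty$), the Cheeger energies of the two spaces agree: any Lipschitz function on $\overline{\X}^+_\alpha$ extends to a Lipschitz function on $\X_\alpha$ with the same slope on $\supp(\tilde{\m})$. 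Thus it suffices to prove that $(\X_\alpha,\di_{\X_\alpha},\tilde{\m})$ is infinitesimally Hilbertian.

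To deal with (b), the argument of \cite{ledonne-lucic-pasqualetto2023} produces an infinitesimally Hilbertian structure on a sub-Riemannian manifold as a monotone (Mosco-type) limit of the Cheeger energies associated to smooth Finsler (in fact Riemannian) approximations of the sub-Riemannian metric. When $\alpha\in\N$ the generating frames $\{X, Y^\alpha\}$ are smooth and bracket-generating, so their theorem applies verbatim. For the remaining values of $\alpha$ (including $\alpha = \infty$ in the sense of \cref{def:infty}), I would approximate the almost-Riemannian metric from above by a sequence of smooth Riemannian metrics $g_\epsilon$ on $\X_\alpha$ obtained by regularising the warping factor near $\{x=0\}$ (for instance by replacing $\abs{\sin x}^\alpha$ with a smooth positive function that coincides with it away from $\{|x|\leq \epsilon\}$). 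Each approximating Riemannian manifold is trivially infinitesimally Hilbertian, and the associated distances decrease monotonically to $\di_{\X_\alpha}$, so standard Mosco/$\Gamma$-convergence arguments for Cheeger energies (as used in \cite{ledonne-lucic-pasqualetto2023}) yield that the limit Cheeger energy is still quadratic, i.e. that $(\X_\alpha,\di_{\X_\alpha},\tilde{\m})$ is infinitesimally Hilbertian.

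The main obstacle is technical rather than conceptual: verifying that the Finsler/Riemannian approximation procedure of \cite{ledonne-lucic-pasqualetto2023} still produces a quadratic limit energy in the presence of the non-smooth frame and, in the $\infty$-Grushin case, of the very fast degeneration of the warping factor near the singular set; here one must check that the approximation can be arranged so that $\tilde{\m}$-a.e.\ convergence of the approximating squared slopes to the intrinsic slope is preserved. Once this is done, the equivalence $\cd(K,N)\Leftrightarrow \RCD(K,N)$ for the $\alpha$-Grushin half-spaces follows at once.
\end{pr}
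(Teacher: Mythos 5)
Your proposal follows essentially the same route as the paper: the paper likewise reduces the statement to infinitesimal Hilbertianity, invokes \cite[Theorem 1.2]{ledonne-lucic-pasqualetto2023}, regards the half-spaces as full $\alpha$-Grushin spaces carrying a measure supported on the half-space, and notes that the Finsler approximation techniques of that reference still apply despite the failure of the bracket-generating condition for $\alpha\notin\N$. Your write-up merely spells out in more detail the regularisation of the warping factor and the Mosco-convergence step that the paper leaves implicit.
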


\section{Generalised Ricci curvature of \texorpdfstring{$\alpha$}{alpha}-Grushin half-spaces}
\label{section:riccicomputation}

The previous section shows that establishing the validity of the $\mathsf{CD}(K, N)$ condition in the $\alpha$-Grushin half-spaces introduced in \cref{section:themodels} is equivalent to a computation of the Bakry--\'Emery Ricci curvature. The following simple computation will be handy.

\begin{lemma}
\label{e:Rictensor}
    Let $(M, g)$ be a $2$-dimensional Riemannian manifold, $\mathcal{U} \subseteq M$ an open set, and $N \in (-\infty, 0) \cup (2, +\infty]$. Assume that $(x, y) : \mathcal{U} \to \mathbb{R}^2$ is a chart and that we are given two smooth functions $f : \mathcal{U} \to \mathbb{R}\setminus \{0\}$ and $V : \mathcal{U} \to \mathbb{R}$ that only depends on $x$. If, in this coordinate,
    \[
    g = \diff x \otimes \diff x + \frac{1}{f(x)^2} \diff y \otimes \diff y,
    \]
    then it holds that
\begin{equation*}
        \begin{aligned}
            \Ric_{N,V} = \left[ \left(\frac{f'}{f}\right)' - \left(\frac{f'}{f}\right)^2 + V'' - \frac{(V')^2}{N - 2}\right] \diff x \otimes \diff x
            + \frac{1}{f^2}\left[\left(\frac{f'}{f}\right)' - \left(\frac{f'}{f}\right)^2 - \frac{f'}{f} V'\right] \diff y \otimes \diff y,
        \end{aligned}
\end{equation*}
where all the derivatives are understood with respect to the variable $x$.
\end{lemma}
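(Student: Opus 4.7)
The plan is to reduce the statement to a direct computation using the warped‐product structure of $g$. Since $M$ is two-dimensional, the Ricci tensor equals $K g$ where $K$ is the Gaussian curvature, so the bulk of the work is identifying $K$, computing $\hes(V)$, and plugging everything into the definition
\[
\Ric_{N,V} = \Ric + \hes(V) - \frac{\diff V \otimes \diff V}{N-2},
\]
with the convention that the last term vanishes when $N = +\infty$.

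First, I would compute the Gaussian curvature. For a metric of the form $\diff x \otimes \diff x + G(x) \diff y \otimes \diff y$ with $G = 1/f^2$, Brioschi/Gauss's formula $K = -(\partial_x^2 \sqrt{G})/\sqrt{G}$ gives, after differentiating $1/|f|$ twice,
\[
K = \frac{f''}{f} - 2\left(\frac{f'}{f}\right)^2 = \left(\frac{f'}{f}\right)' - \left(\frac{f'}{f}\right)^2,
\]
which is exactly the scalar factor that appears in both diagonal entries of the claimed formula. Because $f$ enters the metric only through $f^2$ and through the logarithmic derivative $f'/f$, the sign of $f$ is irrelevant, which is why the assumption $f \neq 0$ is enough.

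Next, I would compute the Christoffel symbols of $g$ in the coordinates $(x,y)$. Since the metric coefficients depend only on $x$, the only nonzero symbols are $\Gamma^x_{yy} = f'/f^3$ and $\Gamma^y_{xy} = \Gamma^y_{yx} = -f'/f$. From the formula $\hes(V)(\partial_i,\partial_j) = \partial_i \partial_j V - \Gamma^k_{ij}\partial_k V$ and the hypothesis $V = V(x)$, only two components survive, namely $\hes(V)_{xx} = V''$ and $\hes(V)_{yy} = -f' V'/f^3$. Meanwhile $\diff V \otimes \diff V = (V')^2 \diff x \otimes \diff x$.

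Combining these three ingredients and using $\Ric = K \, g$, the $\diff x \otimes \diff x$ component of $\Ric_{N,V}$ is $K + V'' - (V')^2/(N-2)$ and the $\diff y \otimes \diff y$ component is $\tfrac{1}{f^2}\bigl(K - f' V'/f\bigr)$, which match the statement once $K$ is substituted. The same computation is valid verbatim for $N \in (-\infty,0)$ and for $N = +\infty$ (in which case the $(V')^2$ term is simply absent). There is no real obstacle here; the proof is entirely mechanical bookkeeping, and the main advantage of stating the result in this form is that it yields the Ricci curvature of the $\alpha$-Grushin half-spaces of \cref{section:themodels} by a single substitution of $f$ and $V$.
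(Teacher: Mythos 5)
Your proof is correct, and it follows a genuinely different (though equally elementary) route from the paper's. The paper works entirely in the $g$-orthonormal frame $X=\partial_x$, $Y=f\partial_y$: it computes the single bracket $[X,Y]=\tfrac{f'}{f}Y$, derives the covariant derivatives from the Koszul formula, reads off $\mathrm{R}(X,Y,X,Y)=\bigl(\tfrac{f'}{f}\bigr)'-\bigl(\tfrac{f'}{f}\bigr)^2$, and evaluates $\hes(V)$ on the frame via $\hes(V)(Y,Y)=-(\nabla_Y Y)(V)=-\tfrac{f'}{f}V'$. You instead stay in the coordinate basis, invoke the two-dimensional identity $\Ric=Kg$ together with the Gauss/Brioschi formula $K=-(\sqrt{G})''/\sqrt{G}$ for $G=1/f^2$, and get the Hessian from the Christoffel symbols $\Gamma^x_{yy}=f'/f^3$ and $\Gamma^y_{xy}=-f'/f$; all of these values check out, and the observation that only $f^2$ and $f'/f$ enter (so the sign of $f$ and the absolute value in $\sqrt{G}=1/\abs{f}$ are harmless) is a worthwhile remark the paper leaves implicit. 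The frame approach buys slightly cleaner bookkeeping (one bracket, four covariant derivatives, no inverse-metric factors), while your coordinate approach buys the shortcut $\Ric=Kg$ and a ready-made curvature formula; the handling of the $N=+\infty$ and $N<0$ cases is the same in both. No gaps.
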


\begin{proof}
The two vector fields $X = \partial_x$ and $Y = f(x) \partial_y$ defined on $\mathcal{U}$ form a family of $g$-orthonormal fields. The only non-zero bracket relation is
\[
[X, Y] = \frac{f'}{f} Y.
\]
Using the Koszul formula, we easily obtain
\[
\nabla_X X = \nabla_X Y = 0, \qquad \nabla_Y X = - \frac{f'}{f} Y, \qquad \nabla_Y Y = \frac{f'}{f} X.
\]
It follows that the only non-zero entry of the Riemann curvature tensor is
\[
\mathrm{R}(X, Y, X, Y) = \left(\frac{f'}{f}\right)' - \left(\frac{f'}{f}\right)^2.
\]
Using the symmetries, we obtain
\[
\mathrm{Ric} = \left[\left(\frac{f'}{f}\right)' - \left(\frac{f'}{f}\right)^2\right] \diff x \otimes \diff x + \frac{1}{f^2}\left[\left(\frac{f'}{f}\right)' - \left(\frac{f'}{f}\right)^2\right] \diff y \otimes \diff y.
\]
We also find that $\nabla V =X(V)X$, and that the only non-zero entries of the Hessian are
\[\hes(V)(X,X) = V'',~~\hes(V)(Y,Y)=-\frac{f'}{f}V'.\]
Finally, one has the non-zero entry
\[\diff V\otimes \diff V(X,X)=(V')^2,\]
and the proof is complete with \cref{eq:NRicci}.
\end{proof}

\paragraph{The $\alpha$-Grushin hemisphere.}
We make use of the coordinate chart and the notation described in \cref{section:alphaGrushin}.



\begin{prop}
    Given $K \in \R$, $N \in \ointerval{-\infty}{0} \cup \linterval{2}{+\infty}$, $\alpha \geq 0$ and $\beta \geq \alpha$, the $N$-Ricci curvature of the $\alpha$-Grushin open hemisphere $(\Sbb_\alpha^+, \diff_{\S_\alpha}, \mathfrak{m}^\beta_{\S_\alpha})$ is
\begin{align*}
    \Ric_{N,V}=&-\frac{1}{\sin^2(x)}\left[3\alpha-1+(\alpha-1)^2\cos^2(x) - \beta + \frac{\beta^2}{N-2}\cos^2(x) \right]\diff x\otimes \diff x \\
    &-\frac{\cos^2(x)}{\sin^{2\alpha+2}(x)}\left[3\alpha-1+(\alpha-1)^2\cos^2(x) - \beta \left(1+(\alpha-1)\cos^2(x)\right)\right]\diff y\otimes \diff y.
\end{align*}
Furthermore, it holds $\Ric_{N,V} \geq K$ if and only if
    \[
    \beta-\alpha^2-\alpha+\min\left(-K+(\alpha-1)^2,~-\frac{\beta^2}{N-2},~\beta(\alpha-1)\right)\geq 0.
    \]
\end{prop}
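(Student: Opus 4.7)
The plan is to apply \cref{e:Rictensor} with $f(x) = \sin^\alpha(x)/\cos(x)$ and $V(x) = -\beta \log \sin(x)$, which respectively produce the metric \cref{eq:SalphaRiemannianMetric} and the potential $V_{\S_\alpha}$ of \cref{def:MeasurealphaGrushinSphere} on the open hemisphere $\S_\alpha^+$, where $\sin(x) > 0$. Logarithmic differentiation gives $f'/f = \alpha \cot(x) + \tan(x)$, and using the elementary identities $\sec^2 - \tan^2 = 1$ and $\csc^2 = 1 + \cot^2$, one obtains
\[
\left(\frac{f'}{f}\right)' - \left(\frac{f'}{f}\right)^2 = -\frac{3\alpha - 1 + (\alpha - 1)^2 \cos^2(x)}{\sin^2(x)}.
\]
Substituting this together with $V' = -\beta \cot(x)$, $V'' = \beta \csc^2(x)$ and $(V')^2 = \beta^2 \cot^2(x)$ into the formula of \cref{e:Rictensor}, and grouping over the common denominator $\sin^2(x)$, yields the two displayed coefficients of $\Ric_{N,V}$.

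For the curvature bound, observe that since $g_{\S_\alpha}$ is diagonal in the chosen chart, the tensor inequality $\Ric_{N,V} \geq K g_{\S_\alpha}$ is equivalent to the two scalar inequalities obtained by dividing the $\diff x \otimes \diff x$ and $\diff y \otimes \diff y$ entries of $\Ric_{N,V}$ by the corresponding entries of $g_{\S_\alpha}$. In both directions, the division yields a common prefactor $-1/\sin^2(x)$ multiplying a polynomial in $\cos^2(x)$. Multiplying through by $\sin^2(x) > 0$ and introducing the substitution $t = \cos^2(x) \in [0, 1)$, each of these inequalities becomes affine in $t$, and therefore holds throughout $\S_\alpha^+$ if and only if it holds at both endpoints $t = 0$ and $t = 1$ (the value at $t = 1$ being understood as a limit, equivalent by continuity to the original condition).

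A straightforward rearrangement, repeatedly invoking the identity $1 - 3\alpha - (\alpha - 1)^2 = -\alpha^2 - \alpha$, shows that the two $t = 0$ endpoints collapse to the single condition $\beta - \alpha^2 - \alpha + ((\alpha - 1)^2 - K) \geq 0$, whereas the $t = 1$ endpoints of the $\diff x \otimes \diff x$ and $\diff y \otimes \diff y$ inequalities give respectively
\[
\beta - \alpha^2 - \alpha - \frac{\beta^2}{N - 2} \geq 0 \qquad \text{and} \qquad \beta - \alpha^2 - \alpha + \beta(\alpha - 1) \geq 0.
\]
The conjunction of these three conditions is precisely the claimed minimum inequality. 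The only real subtlety is bookkeeping: one must recognise that the two directions share the $t = 0$ endpoint, so that only three (rather than four) distinct terms appear inside the minimum.
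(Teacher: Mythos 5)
Your proposal is correct and follows essentially the same route as the paper: apply \cref{e:Rictensor} with $f(x)=\sin^\alpha(x)/\cos(x)$ and $V(x)=-\beta\log\sin(x)$, then reduce $\Ric_{N,V}\geq K$ to two inequalities affine in $\cos^2(x)\in[0,1)$ and check them at the endpoints. Your endpoint evaluation at $t=0$ and $t=1$ (with the $t=0$ conditions coinciding) is just a restatement of the paper's formulation $a+\min(0,b_1,b_2)\geq 0$, and the algebra matches.
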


\begin{proof}
 The first part is done by \cref{e:Rictensor} with $f(x)=\frac{\abs{\sin(x)}^\alpha}{\cos(x)}$ and $V(x)=-\beta\log\abs{\sin(x)}$. For the second part, we see by comparing the coefficients of the tensors $\diff x\otimes \diff x$ and $\diff y\otimes \diff y$ respectively that $\Ric_{N,V}\geq K$ holds if and only if
\begin{equation*}\label{ineq:Ricbound1}
-3\alpha+1+\beta-K+\left[K-(\alpha-1)^2-\frac{\beta^2}{N-2}\right]\cos^2(x)\geq 0\end{equation*}
and
\begin{equation*}\label{ineq:Ricbound2}
    -3\alpha+1+\beta-K+\left[K-(\alpha-1)^2+\beta(\alpha-1)\right]\cos^2(x)\geq 0.
\end{equation*}
Since $\cos(x)$ takes its value in $[0,1)$ on $\S_\alpha^+$,
the above two inequalities are equivalent to the following inequality:
\[-3\alpha+1+\beta-K+\min\left(0,K-(\alpha-1)^2-\frac{\beta^2}{N-2},K-(\alpha-1)^2+\beta(\alpha-1)\right)\geq 0,\]
which is trivially equivalent to the inequality in the statement.
\end{proof}

\paragraph{The $\alpha$-Grushin hyperbolic half-plane.}


We make use of the coordinate chart and the notation described in \cref{section:alphaGrushin2}.




\begin{prop}
    Given $K \in \R$, $N \in \ointerval{-\infty}{0} \cup \linterval{2}{+\infty}$, $\alpha \geq 0$ and $\beta \geq \alpha$, the $N$-Ricci curvature of the $\alpha$-Grushin open hyperbolic half-plane $(\bbH_\alpha^+, \diff_{\bbH_\alpha}, \mathfrak{m}^\beta_{\bbH_\alpha})$ is
\begin{align*}
    \Ric_{N,V}=&-\frac{1}{\sinh^2(x)}\left[3\alpha-1+(\alpha-1)^2\cosh^2(x) - \beta + \frac{\beta^2}{N-2}\cosh^2(x) \right]\diff x\otimes \diff x \\
    &-\frac{\cosh^2(x)}{\sin^{2\alpha+2}(x)}\left[3\alpha-1+(\alpha-1)^2\cosh^2(x) - \beta \left(1+(\alpha-1)\cosh^2(x)\right)\right]\diff y\otimes \diff y.
\end{align*}
    Furthermore, it holds $\Ric_{N,V} \geq K$ if and only if
    \[
    \min\left(-K-(\alpha-1)^2,\beta-\alpha^2-\alpha\right)+\min\left(-\frac{\beta^2}{N-2},~\beta(\alpha-1)\right)\geq 0
    \]
\end{prop}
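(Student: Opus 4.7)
The plan is to mirror the structure of the preceding proof for the $\alpha$-Grushin hemisphere, but carefully account for the different range of the relevant hyperbolic quantity. The first part, the explicit formula for $\Ric_{N,V}$, will follow by a direct application of \cref{e:Rictensor} to the metric \cref{eq:HalphaRiemannianMetric} with $f(x)=\cosh(x)/\abs{\sinh(x)}^\alpha$ and $V(x)=-\beta\log\abs{\sinh(x)}$ as specified in \cref{def:hyperbolic} and \cref{def:hyperbolicmeasure}. The computation is essentially the same as for the hemisphere, using the identities $(\cosh'/\cosh)=\tanh$, $(\tanh)'=1/\cosh^2$ and $\sinh^2+1=\cosh^2$ to match the formula stated.

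For the equivalence with the inequality in the statement, I would rewrite $\Ric_{N,V}\geq K g_{\bbH_\alpha}$ at a point with $x>0$ by comparing the coefficients of $\diff x\otimes\diff x$ and $\diff y\otimes\diff y$, after multiplying through by $\sinh^2(x)$ (for $\diff x\otimes\diff x$) and by $\sinh^{2\alpha+2}(x)/\cosh^2(x)$ (for $\diff y\otimes\diff y$). Using $\sinh^2(x)=\cosh^2(x)-1$, the two conditions become the affine-in-$\cosh^2(x)$ inequalities
\begin{align*}
    \bigl(1 - 3\alpha + \beta + K\bigr) + \Bigl(-K - (\alpha-1)^2 - \tfrac{\beta^2}{N-2}\Bigr)\cosh^2(x) &\geq 0, \\
    \bigl(1 - 3\alpha + \beta + K\bigr) + \bigl(-K - (\alpha-1)^2 + \beta(\alpha-1)\bigr)\cosh^2(x) &\geq 0.
\end{align*}

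The key difference from the hemisphere case is that here $\cosh^2(x)$ ranges over $[1,+\infty)$ instead of $[0,1)$. Thus, for an affine function $t\mapsto C+A t$ to be nonnegative on $[1,+\infty)$, one needs both $A\geq 0$ (no blow-down at infinity) and $C+A\geq 0$ (value at $t=1$). Applying this to both inequalities yields four scalar conditions. Using the identity $1-3\alpha-(\alpha-1)^2 = -\alpha^2-\alpha$ to simplify the values at $t=1$, these four conditions rewrite neatly as
\[
    \min\!\Bigl(-K - (\alpha-1)^2,\; \beta - \alpha^2 - \alpha\Bigr) \geq \max\!\Bigl(\tfrac{\beta^2}{N-2},\; -\beta(\alpha-1)\Bigr),
\]
which is exactly the inequality of the statement after moving the $\max$ to the left-hand side as $-\min$.

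The main obstacle is simply bookkeeping: keeping the signs and the roles of $A$ and $C$ straight, verifying that the algebraic simplification $1-3\alpha-(\alpha-1)^2 = -\alpha-\alpha^2$ is used correctly, and correctly transforming four separate inequalities into the compact combined form. There is no topological or analytic subtlety beyond this, since the singular locus $\{x=0\}$ is excluded in the statement (the computation is on the open half-plane $\bbH_\alpha^+$) and the behavior as $x\to+\infty$ forces the coefficient sign conditions on $A_1$ and $A_2$.
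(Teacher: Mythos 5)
Your proposal follows exactly the paper's route: apply \cref{e:Rictensor} to get the tensor, compare the $\diff x\otimes\diff x$ and $\diff y\otimes\diff y$ coefficients to get two inequalities affine in $\cosh^2(x)$, and use that $\cosh^2(x)$ ranges over $[1,+\infty)$ (slope nonnegative plus nonnegativity at $t=1$) to arrive at the stated min/max condition, which is equivalent to the paper's formulation $\min(0,-3\alpha+1+\beta+K)+\min(A_1,A_2)\geq 0$. The only slip is that, with the lemma's convention $g=\diff x\otimes\diff x+f^{-2}\diff y\otimes\diff y$, one must take $f(x)=\abs{\sinh(x)}^{\alpha}/\cosh(x)$ (the coefficient of $\partial_y$ in $Y^\alpha$), not its reciprocal as you wrote; otherwise the sign of $(f'/f)'$ comes out wrong.
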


\begin{proof}
 The first part is done by \cref{e:Rictensor} with $f(x)=\frac{\abs{\sinh(x)}^\alpha}{\cosh(x)}$ and $V(x)=-\beta\log\abs{\sinh(x)}$. For the second part, we see by comparing the coefficients of the tensors $\diff x\otimes \diff x$ and $\diff y\otimes \diff y$ respectively that $\Ric_{N,V}\geq K$ holds if and only if
\begin{equation*}\label{ineq:Ricbound1-2}
-3\alpha+1+\beta+K+\left[-K-(\alpha-1)^2-\frac{\beta^2}{N-2}\right]\cosh^2(x)\geq 0\end{equation*}
and
\begin{equation*}\label{ineq:Ricbound2-2}
    -3\alpha+1+\beta+K+\left[-K-(\alpha-1)^2+\beta(\alpha-1)\right]\cosh^2(x)\geq 0.
\end{equation*}
Since $\cosh(x)\in[1,+\infty)$,
the two inequalities hold for all $x\geq 0$ if and only if
 \[\min\left(0,-3\alpha+1+\beta+K\right)+\min\left(-K-(\alpha-1)^2-\frac{\beta^2}{N-2},-K-(\alpha-1)^2+\beta(\alpha-1)\right)\geq 0\]
holds, which is equivalent to the inequality in the statement.

\end{proof}

\paragraph{The $\infty$-Grushin half-plane.}


We make use of the coordinate chart and the notation described in \cref{section:alphaGrushin3}.



\begin{prop}
    Given $K \in \R$, $N \in \ointerval{-\infty}{0} \cup \linterval{2}{+\infty}$ and $\beta,
    \gamma\geq 0$, the $N$-Ricci curvature of the $\infty$-Grushin open half-plane $(\G_\infty^+, \diff_{\G_\infty}, \mathfrak{m}^{\beta,\gamma}_{\G_\infty})$ is
\begin{align*}
    \Ric_{N,V}=&\left[\frac{6\gamma-1}{|x|^4}-\frac{2}{|x|^3}+\frac{\beta}{|x|^2}-\frac{1}{(N-2)x^6}\left(2\gamma+\beta x^2\right)^2\right]\diff x\otimes \diff x\\
    &+\left[\frac{2\gamma}{|x|^5}-\frac{1}{|x|^4}+\frac{\beta-2}{|x|^3}\right]e^{\frac{2}{|x|}}\diff y\otimes\diff y.
\end{align*}
    It holds $\Ric_{N,V} \geq K$ if and only if the following inequalities hold for any $x\geq 0$:
\[(6\gamma-1)x^2-2x^3+\beta x^4-\frac{1}{N-2}(2\gamma+\beta x^2)^2\geq Kx^6~~\text{and}~~2\gamma-x+(\beta-2)x^2\geq Kx^5.\]
In particular, $\Ric_{\infty,V}\geq 0$ holds if and only if
\[\min\{\beta(6\gamma-1),8(\beta-2)\gamma\}\geq 1.\]
\end{prop}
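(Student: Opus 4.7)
The plan is to reduce everything to Lemma \ref{e:Rictensor}. The metric \cref{eq:GinftyRiemannianMetric} on $\G_\infty^+$ is exactly of the form covered by the lemma with $f(x) = e^{-1/x}$ (valid since $x > 0$) and the potential $V(x) = \gamma/x^2 - \beta \log x$ from \cref{def:infty_measure}. First I would record the auxiliary derivatives $f'/f = 1/x^2$, $(f'/f)' = -2/x^3$, $V' = -2\gamma/x^3 - \beta/x$, $V'' = 6\gamma/x^4 + \beta/x^2$, and $(V')^2 = (2\gamma + \beta x^2)^2/x^6$, then substitute into the $dx\otimes dx$ and $dy\otimes dy$ coefficient formulas of the lemma and simplify. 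This directly yields the stated formula for $\Ric_{N,V}$, with the $e^{2/x}$ factor in the $dy\otimes dy$ term coming from $1/f^2$.

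Next I would translate $\Ric_{N,V} \geq K$ into scalar pointwise inequalities. Since $\Ric_{N,V}$ and $g_{\G_\infty}$ are diagonal in $(x,y)$ and $g_{\G_\infty}(\partial_x,\partial_x) = 1$, $g_{\G_\infty}(\partial_y,\partial_y) = e^{2/x}$, the tensor inequality is equivalent to the two scalar inequalities obtained by comparing diagonal entries. Clearing denominators by multiplying the first inequality by $x^6 > 0$ and the second (after dividing by $e^{2/x}$) by $x^5 > 0$ gives the two inequalities in the statement.

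For the final characterisation of $\Ric_{\infty,V} \geq 0$ I would specialise $K = 0$ and $N = +\infty$, which kills the $(2\gamma + \beta x^2)^2/(N-2)$ term. Factoring $x^2$ from the first reduced inequality and $x$ from the second leaves the two quadratics $\beta x^2 - 2x + (6\gamma - 1) \geq 0$ and $(\beta - 2)x^2 - x + 2\gamma \geq 0$, required for all $x > 0$. Because the linear coefficient in each is negative, nonnegativity on $\R_{>0}$ forces the leading coefficient to be strictly positive, after which the condition is precisely that the discriminant is nonpositive: $4 - 4\beta(6\gamma-1) \leq 0$ and $1 - 8(\beta-2)\gamma \leq 0$. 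These rearrange to $\beta(6\gamma - 1) \geq 1$ and $8(\beta - 2)\gamma \geq 1$, which together amount to $\min\{\beta(6\gamma-1), 8(\beta-2)\gamma\} \geq 1$. The only subtlety to check carefully is that each of these inequalities already forces the requisite positivity of its leading coefficient ($\beta > 0$ in the first, $\beta > 2$ in the second), so no additional hypothesis is hidden; the step is routine but worth spelling out to justify dropping the ``discriminant'' criterion cleanly.
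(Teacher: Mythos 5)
Your proposal is correct and follows essentially the same route as the paper: apply Lemma \ref{e:Rictensor} with $f(x)=e^{-1/x}$ and $V(x)=\gamma/x^2-\beta\log x$, compare the diagonal coefficients against $g_{\G_\infty}$, and clear denominators. The only place you go beyond the paper's (very terse) proof is in spelling out the vertex/discriminant analysis for the two quadratics in the $\Ric_{\infty,V}\geq 0$ case, and your observation that each inequality already forces the positivity of the corresponding leading coefficient is exactly the point needed to make that step airtight.
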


\begin{proof}
The proof is reached by using \cref{e:Rictensor} with $f(x)=e^{-1/|x|}$ and $V(x)=\frac{\gamma}{|x|^2}-\beta\log|x|$, and by comparing the coefficients of $\diff x\otimes \diff x$ and $\diff y\otimes \diff y$ respectively.
\end{proof}

\cref{thm:main1}, \cref{thm:main2}, \cref{thm:main3} follow from the Ricci computations this section, noting that the validity of the assumptions in \cref{theorem:equivalence} are verified because of the results of \cref{section:themodels}.

  \printbibliography

@InCollection{Franchi1983,
  author     = {Franchi, Bruno and Lanconelli, Ermanno},
  title      = {Une métrique associée à une classe d'opérateurs elliptiques dégénérés},
  year       = {1983},
  note       = {Conference on linear partial and pseudodifferential operators (Torino, 1982)},
  pages      = {105--114},
  fjournal   = {Rendiconti del Seminario Matematico (già ``Conferenze di Fisica e di Matematica”''). Università e Politecnico di Torino},
  issn       = {0373-1243},
  journal    = {Rend. Sem. Mat. Univ. Politec. Torino},
  mrclass    = {35J70 (58G30)},
  mrnumber   = {745979},
  mrreviewer = {William\ Margulies},
}

@Article{Ambrosio2015,
  author     = {Ambrosio, Luigi and Gigli, Nicola and Savaré, Giuseppe},
  journal    = {Ann. Probab.},
  title      = {Bakry–Émery curvature-dimension condition and Riemannian Ricci curvature bounds},
  year       = {2015},
  issn       = {0091-1798,2168-894X},
  month      = feb,
  number     = {1},
  pages      = {339--404},
  volume     = {43},
  doi        = {10.1214/14-AOP907},
  file       = {:Ambrosio2015 - Bakry–Émery Curvature Dimension Condition and Riemannian Ricci Curvature Bounds.pdf:PDF},
  fjournal   = {The Annals of Probability},
  mrclass    = {49Q20 (30L99 47D07 60J35)},
  mrnumber   = {3298475},
  mrreviewer = {Andrew\ Bucki},
  publisher  = {Institute of Mathematical Statistics},
}

@Article{Erbar2015,
  author     = {Erbar, Matthias and Kuwada, Kazumasa and Sturm, Karl-Theodor},
  journal    = {Invent. Math.},
  title      = {On the equivalence of the entropic curvature-dimension condition and Bochner’s inequality on metric measure spaces},
  year       = {2014},
  issn       = {0020-9910,1432-1297},
  month      = dec,
  number     = {3},
  pages      = {993--1071},
  volume     = {201},
  doi        = {10.1007/s00222-014-0563-7},
  fjournal   = {Inventiones Mathematicae},
  mrclass    = {53C23},
  mrnumber   = {3385639},
  mrreviewer = {Fabrice\ Baudoin},
  publisher  = {Springer Science and Business Media LLC},
  url        = {https://doi.org/10.1007/s00222-014-0563-7},
}

@online{Magnabosco2023c,
  author      = {Lorenzo Dello Schiavo and Mattia Magnabosco and Chiara Rigoni},
  title       = {Gradient flows of $(K,N)$-convex functions with negative $N$},
  year        = {2024},
  version     = 2,
  eprinttype  = {arxiv},
  eprintclass = {math.FA},
  eprint      = {arXiv:2412.04574}
}

@Article{Magnabosco2023a,
  author     = {Magnabosco, Mattia and Rigoni, Chiara and Sosa, Gerardo},
  journal    = {Nonlinear Analysis},
  title      = {Convergence of metric measure spaces satisfying the CD condition for negative values of the dimension parameter},
  year       = {2023},
  issn       = {0362-546X,1873-5215},
  month      = dec,
  pages      = {Paper No. 113366, 48},
  volume     = {237},
  doi        = {10.1016/j.na.2023.113366},
  file       = {:1-s2.0-S0362546X2300158X-main.pdf:PDF},
  fjournal   = {Nonlinear Analysis. Theory, Methods \& Applications. An International Multidisciplinary Journal},
  mrclass    = {49Q20 (49Q22 53C21 53C23)},
  mrnumber   = {4640254},
  mrreviewer = {Daniele\ Semola},
  publisher  = {Elsevier BV},
  url        = {https://doi.org/10.1016/j.na.2023.113366},
}

@Article{Magnabosco2023b,
  author    = {Magnabosco, Mattia and Rigoni, Chiara},
  journal   = {Tohoku Mathematical Journal},
  title     = {Optimal maps and local-to-global property in negative dimensional spaces with Ricci curvature bounded from below},
  year      = {2023},
  issn      = {0040-8735,2186-585X},
  month     = dec,
  number    = {4},
  pages     = {483--507},
  volume    = {75},
  doi       = {10.2748/tmj.20220420},
  fjournal  = {The Tohoku Mathematical Journal. Second Series},
  mrclass   = {49Q22 (30L99 53C21)},
  mrnumber  = {4677752},
  publisher = {Mathematical Institute, Tohoku University},
  url       = {https://doi.org/10.2748/tmj.20220420},
}

@Article{Milman2017a,
  author     = {Milman, Emanuel},
  journal    = {Trans. Amer. Math. Soc.},
  title      = {Beyond traditional Curvature-Dimension I: New model spaces for isoperimetric and concentration inequalities in negative dimension},
  year       = {2017},
  issn       = {0002-9947,1088-6850},
  month      = dec,
  number     = {5},
  pages      = {3605--3637},
  volume     = {369},
  doi        = {10.1090/tran/6796},
  file       = {:Milman2017a - Beyond Traditional Curvature Dimension I_ New Model Spaces for Isoperimetric and Concentration Inequalities in Negative Dimension.pdf:PDF},
  fjournal   = {Transactions of the American Mathematical Society},
  mrclass    = {53C21 (58J50)},
  mrnumber   = {3605981},
  mrreviewer = {Thierry\ Coulhon},
  publisher  = {American Mathematical Society (AMS)},
}

@Article{Milman2017,
  author     = {Milman, Emanuel},
  journal    = {Annales de la Faculté des sciences de Toulouse : Mathématiques},
  title      = {Harmonic Measures on the Sphere via Curvature-Dimension},
  year       = {2017},
  issn       = {0240-2963,2258-7519},
  month      = apr,
  number     = {2},
  pages      = {437--449},
  volume     = {26},
  doi        = {10.5802/afst.1540},
  fjournal   = {Annales de la Facult\'e{} des Sciences de Toulouse. Math\'ematiques. S\'erie 6},
  mrclass    = {53C21 (60D05)},
  mrnumber   = {3640898},
  mrreviewer = {Nicolas\ Juillet},
  publisher  = {Cellule MathDoc/CEDRAM},
}

@Article{Ambrosio2013,
  author    = {Ambrosio, Luigi and Gigli, Nicola and Savaré, Giuseppe},
  journal   = {Revista Matemática Iberoamericana},
  title     = {Density of Lipschitz functions and equivalence of weak gradients in metric measure spaces},
  year      = {2013},
  issn      = {0213-2230,2235-0616},
  month     = aug,
  number    = {3},
  pages     = {969--996},
  volume    = {29},
  doi       = {10.4171/RMI/746},
  file      = {:Ambrosio2013 - Density of Lipschitz Functions and Equivalence of Weak Gradients in Metric Measure Spaces.pdf:PDF},
  fjournal  = {Revista Matem\'atica Iberoamericana},
  mrclass   = {46E35},
  mrnumber  = {3090143},
  publisher = {European Mathematical Society - EMS - Publishing House GmbH},
}

@article {han2020,
    AUTHOR = {Han, Bang-Xian},
     TITLE = {Measure rigidity of synthetic lower {R}icci curvature bound on
              {R}iemannian manifolds},
   JOURNAL = {Adv. Math.},
  FJOURNAL = {Advances in Mathematics},
    VOLUME = {373},
      YEAR = {2020},
     PAGES = {107327, 31},
      ISSN = {0001-8708,1090-2082},
   MRCLASS = {58J65 (53C21)},
  MRNUMBER = {4129481},
MRREVIEWER = {Nathaniel\ Eldredge},
       DOI = {10.1016/j.aim.2020.107327},
       URL = {https://doi.org/10.1016/j.aim.2020.107327},
}

@article {ambrosio2020,
    AUTHOR = {Ambrosio, Luigi and Stefani, Giorgio},
     TITLE = {Heat and entropy flows in {C}arnot groups},
   JOURNAL = {Rev. Mat. Iberoam.},
  FJOURNAL = {Revista Matem\'atica Iberoamericana},
    VOLUME = {36},
      YEAR = {2020},
    NUMBER = {1},
     PAGES = {257--290},
      ISSN = {0213-2230,2235-0616},
   MRCLASS = {53C17 (28A33 35K15 35R03 58J35)},
  MRNUMBER = {4061989},
MRREVIEWER = {Nathaniel\ Eldredge},
       DOI = {10.4171/rmi/1129},
       URL = {https://doi.org/10.4171/rmi/1129},
}

@article {honda2023,
    AUTHOR = {Dai, Xianzhe and Honda, Shouhei and Pan, Jiayin and Wei,
              Guofang},
     TITLE = {Singular {W}eyl's law with {R}icci curvature bounded below},
   JOURNAL = {Trans. Amer. Math. Soc. Ser. B},
  FJOURNAL = {Transactions of the American Mathematical Society. Series B},
    VOLUME = {10},
      YEAR = {2023},
     PAGES = {1212--1253},
      ISSN = {2330-0000},
   MRCLASS = {53C23 (53C17 53C21)},
  MRNUMBER = {4634191},
MRREVIEWER = {Yaoting\ Gui},
       DOI = {10.1090/btran/160},
       URL = {https://doi.org/10.1090/btran/160},
}

@article {juillet2005,
    AUTHOR = {Juillet, Nicolas},
     TITLE = {Geometric inequalities and generalized {R}icci bounds in the
              {H}eisenberg group},
   JOURNAL = {Int. Math. Res. Not. IMRN},
  FJOURNAL = {International Mathematics Research Notices. IMRN},
      YEAR = {2009},
    NUMBER = {13},
     PAGES = {2347--2373},
      ISSN = {1073-7928,1687-0247},
   MRCLASS = {53C23 (49Q05)},
  MRNUMBER = {2520783},
MRREVIEWER = {Alessio\ Figalli},
       DOI = {10.1093/imrn/rnp019},
       URL = {https://doi.org/10.1093/imrn/rnp019},
}

@article {gigli2015,
    AUTHOR = {Gigli, Nicola and Mondino, Andrea and Savaré, Giuseppe},
     TITLE = {Convergence of pointed non-compact metric measure spaces and
              stability of {R}icci curvature bounds and heat flows},
   JOURNAL = {Proc. Lond. Math. Soc. (3)},
  FJOURNAL = {Proceedings of the London Mathematical Society. Third Series},
    VOLUME = {111},
      YEAR = {2015},
    NUMBER = {5},
     PAGES = {1071--1129},
      ISSN = {0024-6115,1460-244X},
   MRCLASS = {53C23 (28A33 53C21 58J35)},
  MRNUMBER = {3477230},
       DOI = {10.1112/plms/pdv047},
       URL = {https://doi.org/10.1112/plms/pdv047},
}

@incollection{borza2023,
    author    = {Samuël Borza},
    title     = {Normal forms for the sub-Riemannian exponential map of $\mathbb{G}_\alpha$, $\mathrm{SU}(2)$, and $\mathrm{SL}(2)$},
    booktitle = {New Trends in Sub-Riemannian Geometry},
    series    = {Contemporary Mathematics},
    volume    = {809},
    year      = {2025},
    publisher = {American Mathematical Society},
    doi       = {10.1090/conm/809}
}

@article {li2012,
    AUTHOR = {Chang, Der-Chen and Li, Yutian},
     TITLE = {Sub{R}iemannian geodesics in the {G}rushin plane},
   JOURNAL = {J. Geom. Anal.},
  FJOURNAL = {Journal of Geometric Analysis},
    VOLUME = {22},
      YEAR = {2012},
    NUMBER = {3},
     PAGES = {800--826},
      ISSN = {1050-6926,1559-002X},
   MRCLASS = {53C17},
  MRNUMBER = {2927679},
MRREVIEWER = {Thomas\ Bieske},
       DOI = {10.1007/s12220-011-9215-y},
       URL = {https://doi.org/10.1007/s12220-011-9215-y},
}

@article {mccann,
    AUTHOR = {McCann, Robert J.},
     TITLE = {Polar factorization of maps on {R}iemannian manifolds},
   JOURNAL = {Geom. Funct. Anal.},
  FJOURNAL = {Geometric and Functional Analysis},
    VOLUME = {11},
      YEAR = {2001},
    NUMBER = {3},
     PAGES = {589--608},
      ISSN = {1016-443X,1420-8970},
   MRCLASS = {58E15 (46N10 49Q20 53C20)},
  MRNUMBER = {1844080},
MRREVIEWER = {Lucio\ Renato\ Berrone},
       DOI = {10.1007/PL00001679},
       URL = {https://doi.org/10.1007/PL00001679},
}

@article {borza2022,
    AUTHOR = {Borza, Samu\"{e}l},
     TITLE = {Distortion coefficients of the {$\alpha$}-{G}rushin plane},
   JOURNAL = {J. Geom. Anal.},
  FJOURNAL = {Journal of Geometric Analysis},
    VOLUME = {32},
      YEAR = {2022},
    NUMBER = {3},
     PAGES = {Paper No. 78, 28},
      ISSN = {1050-6926},
   MRCLASS = {53C17 (28A75 28C10 51F99 53B99)},
  MRNUMBER = {4363751},
MRREVIEWER = {Luca Rizzi},
       DOI = {10.1007/s12220-021-00736-8},
       URL = {https://doi.org/10.1007/s12220-021-00736-8},
}

@article {juillet2020,
    AUTHOR = {Nicolas Juillet},
     TITLE = {Sub-Riemannian Structures do not satisfy Riemannian Brunn--Minkowski inequality},
   JOURNAL = {Rev. Mat. Iberoam.},
  FJOURNAL = {Revista Matematica Iberoamericana},
    VOLUME = {37},
     PAGES = {177--188},
      YEAR = {2020},
    NUMBER = {1},
       DOI = {10.4171/rmi/1205},
       URL = {https://doi.org/10.4171/rmi/1205},
}

@online{magnabosco2023failure,
  author      = {Mattia Magnabosco and Tommaso Rossi},
  title       = {Failure of the curvature-dimension condition in sub-Finsler manifolds},
  year        = {2023},
  version     = 2,
  eprinttype  = {arxiv},
  eprintclass = {math.MG},
  eprint      = {2307.01820}
}

@article {magnaboscorossi,
    AUTHOR = {Magnabosco, Mattia and Rossi, Tommaso},
     TITLE = {Almost-{R}iemannian manifolds do not satisfy the
              curvature-dimension condition},
   JOURNAL = {Calc. Var. Partial Differential Equations},
  FJOURNAL = {Calculus of Variations and Partial Differential Equations},
    VOLUME = {62},
      YEAR = {2023},
    NUMBER = {4},
     PAGES = {Paper No. 123, 27},
      ISSN = {0944-2669},
   MRCLASS = {53C17 (49Q20 49Q22 53C22 53C23)},
  MRNUMBER = {4562156},
MRREVIEWER = {Nathaniel Eldredge},
       DOI = {10.1007/s00526-023-02466-x},
       URL = {https://doi.org/10.1007/s00526-023-02466-x},
}

@book {ABB-srgeom,
    AUTHOR = {Agrachev, Andrei and Barilari, Davide and Boscain, Ugo},
     TITLE = {A comprehensive introduction to sub-Riemannian geometry},
    SERIES = {Cambridge Studies in Advanced Mathematics},
    VOLUME = {181},
      NOTE = {From the Hamiltonian viewpoint,
              With an appendix by Igor Zelenko},
 PUBLISHER = {Cambridge University Press, Cambridge},
      YEAR = {2020},
     PAGES = {xviii+745},
      ISBN = {978-1-108-47635-5},
   MRCLASS = {53C17},
  MRNUMBER = {3971262},
MRREVIEWER = {Luca Rizzi},
  doi        = {10.1017/9781108677325}
}

@article {sturm2006,
    AUTHOR = {Sturm, Karl-Theodor},
     TITLE = {On the geometry of metric measure spaces. {II}},
   JOURNAL = {Acta Math.},
  FJOURNAL = {Acta Mathematica},
    VOLUME = {196},
      YEAR = {2006},
    NUMBER = {1},
     PAGES = {133--177},
      ISSN = {0001-5962},
   MRCLASS = {53C23},
  MRNUMBER = {2237207},
MRREVIEWER = {Juha Heinonen},
       DOI = {10.1007/s11511-006-0003-7},
       URL = {https://doi.org/10.1007/s11511-006-0003-7},
}

@article {lott--villani2009,
    AUTHOR = {Lott, John and Villani, C\'{e}dric},
     TITLE = {Ricci curvature for metric-measure spaces via optimal
              transport},
   JOURNAL = {Ann. of Math. (2)},
  FJOURNAL = {Annals of Mathematics. Second Series},
    VOLUME = {169},
      YEAR = {2009},
    NUMBER = {3},
     PAGES = {903--991},
      ISSN = {0003-486X},
   MRCLASS = {53C23 (49Q15)},
  MRNUMBER = {2480619},
MRREVIEWER = {Alessio Figalli},
       DOI = {10.4007/annals.2009.169.903},
       URL = {https://doi.org/10.4007/annals.2009.169.903},
}

@article {rizzi2023failure,
    AUTHOR = {Rizzi, Luca and Stefani, Giorgio},
     TITLE = {Failure of curvature-dimension conditions on sub-{R}iemannian
              manifolds via tangent isometries},
   JOURNAL = {J. Funct. Anal.},
  FJOURNAL = {Journal of Functional Analysis},
    VOLUME = {285},
      YEAR = {2023},
    NUMBER = {9},
     PAGES = {Paper No. 110099, 31},
      ISSN = {0022-1236,1096-0783},
   MRCLASS = {53C17 (28A75 54E45)},
  MRNUMBER = {4623954},
       DOI = {10.1016/j.jfa.2023.110099},
       URL = {https://doi.org/10.1016/j.jfa.2023.110099},
}

@article{Gigli12,
	author = {Gigli, Nicola},
	date-added = {2016-10-02 12:34:58 +0000},
	date-modified = {2017-01-12 15:59:11 +0000},
	doi = {10.1090/memo/1113},
	fjournal = {Memoirs of the American Mathematical Society},
	isbn = {978-1-4704-1420-7},
	issn = {0065-9266},
	journal = {Mem. Amer. Math. Soc.},
	mrclass = {53C23 (30L05 30L10 49Q15 58C20)},
	mrnumber = {3381131},
	mrreviewer = {Davide Vittone},
	number = {1113},
	pages = {vi+91},
	title = {On the differential structure of metric measure spaces and applications},
	url = {http://dx.doi.org/10.1090/memo/1113},
	volume = {236},
	year = {2015},
	Bdsk-Url-1 = {http://dx.doi.org/10.1090/memo/1113}}

@article {borzatashiroSIU2024n2,
    AUTHOR = {Borza, Samu\"el and Magnabosco, Mattia and Rossi, Tommaso and
              Tashiro, Kenshiro},
     TITLE = {Curvature {E}xponent of {S}ub-{F}insler {H}eisenberg {G}roups},
   JOURNAL = {SIAM J. Math. Anal.},
  FJOURNAL = {SIAM Journal on Mathematical Analysis},
    VOLUME = {57},
      YEAR = {2025},
    NUMBER = {4},
     PAGES = {3561--3586},
      ISSN = {0036-1410,1095-7154},
   MRCLASS = {99-06},
  MRNUMBER = {4927105},
       DOI = {10.1137/24M1690692},
       URL = {https://doi.org/10.1137/24M1690692},
}

@online{borzatashiroSIU2024n1,
  author      = {Borza,  Samuël and Magnabosco,  Mattia and Rossi,  Tommaso and Tashiro,  Kenshiro},
  title       = {Measure contraction property and curvature-dimension condition on sub-Finsler Heisenberg groups},
  year        = {2024},
  version     = 1,
  eprinttype  = {arxiv},
  eprintclass = {math.MG},
  eprint      = {2402.14779}
}

@article{borzatashiro,
  author    = {Borza, S. and Tashiro, K.},
  title     = {Measure contraction property, curvature exponent and geodesic dimension of sub-{F}insler $\ell^p$-{H}eisenberg groups},
  journal   = {Ann. Inst. Fourier (Grenoble)},
  year      = {to appear},
}

@article {panwei,
    AUTHOR = {Pan, Jiayin and Wei, Guofang},
     TITLE = {Examples of {R}icci limit spaces with non-integer {H}ausdorff
              dimension},
   JOURNAL = {Geom. Funct. Anal.},
  FJOURNAL = {Geometric and Functional Analysis},
    VOLUME = {32},
      YEAR = {2022},
    NUMBER = {3},
     PAGES = {676--685},
      ISSN = {1016-443X},
   MRCLASS = {53C23 (53C21)},
  MRNUMBER = {4431126},
MRREVIEWER = {Lina Chen},
       DOI = {10.1007/s00039-022-00598-4},
       URL = {https://doi.org/10.1007/s00039-022-00598-4},
}

@article {Pan23,
    AUTHOR = {Pan, Jiayin},
     TITLE = {The {G}rushin hemisphere as a {R}icci limit space with
              curvature {$\ge1$}},
   JOURNAL = {Proc. Amer. Math. Soc. Ser. B},
  FJOURNAL = {Proceedings of the American Mathematical Society. Series B},
    VOLUME = {10},
      YEAR = {2023},
     PAGES = {71--75},
   MRCLASS = {53C23 (53C20)},
  MRNUMBER = {4566173},
MRREVIEWER = {Raquel Perales},
       DOI = {10.1090/bproc/160},
       URL = {https://doi.org/10.1090/bproc/160},
}

@book {villani,
    AUTHOR = {Villani, C\'{e}dric},
     TITLE = {Optimal transport: Old and New},

    VOLUME = {338},

 PUBLISHER = {Springer-Verlag, Berlin},
      YEAR = {2009},
     PAGES = {xxii+973},
      ISBN = {978-3-540-71049-3},
   MRCLASS = {49-02 (28A75 37J50 49Q20 53C23 58E30)},
  MRNUMBER = {2459454},
MRREVIEWER = {Dario Cordero-Erausquin},
       DOI = {10.1007/978-3-540-71050-9},
       URL = {https://doi.org/10.1007/978-3-540-71050-9},
}

@article {ledonne-lucic-pasqualetto2023,
    AUTHOR = {Le Donne, Enrico and Lu\v ci\'c, Danka and Pasqualetto,
              Enrico},
     TITLE = {Universal infinitesimal {H}ilbertianity of sub-{R}iemannian
              manifolds},
   JOURNAL = {Potential Anal.},
  FJOURNAL = {Potential Analysis. An International Journal Devoted to the
              Interactions between Potential Theory, Probability Theory,
              Geometry and Functional Analysis},
    VOLUME = {59},
      YEAR = {2023},
    NUMBER = {1},
     PAGES = {349--374},
      ISSN = {0926-2601,1572-929X},
   MRCLASS = {53C17 (46E35 53C23 55R25)},
  MRNUMBER = {4594556},
MRREVIEWER = {Adrian\ Sandovici},
       DOI = {10.1007/s11118-021-09971-8},
       URL = {https://doi.org/10.1007/s11118-021-09971-8},
}

@article {boscain-prandi-seri2016,
    AUTHOR = {Boscain, U. and Prandi, D. and Seri, M.},
     TITLE = {Spectral analysis and the {A}haronov-{B}ohm effect on certain
              almost-{R}iemannian manifolds},
   JOURNAL = {Comm. Partial Differential Equations},
  FJOURNAL = {Communications in Partial Differential Equations},
    VOLUME = {41},
      YEAR = {2016},
    NUMBER = {1},
     PAGES = {32--50},
      ISSN = {0360-5302,1532-4133},
   MRCLASS = {58J05 (35P20 35R01 58J50)},
  MRNUMBER = {3439462},
MRREVIEWER = {Mohammed\ El A\"idi, Universidad Nacional de Colombia},
       DOI = {10.1080/03605302.2015.1095766},
       URL = {https://doi.org/10.1080/03605302.2015.1095766},
}

@book {burago-burago-sergei2001,
    AUTHOR = {Burago, Dmitri and Burago, Yuri and Ivanov, Sergei},
     TITLE = {A course in metric geometry},
    SERIES = {Graduate Studies in Mathematics},
    VOLUME = {33},
 PUBLISHER = {American Mathematical Society, Providence, RI},
      YEAR = {2001},
     PAGES = {xiv+415},
      ISBN = {0-8218-2129-6},
   MRCLASS = {53C23},
  MRNUMBER = {1835418},
MRREVIEWER = {Mario\ Bonk},
       DOI = {10.1090/gsm/033},
       URL = {https://doi.org/10.1090/gsm/033},
}

@book {Martin-book,
    AUTHOR = {Martin, George E.},
     TITLE = {The foundations of geometry and the non-{E}uclidean plane},
    SERIES = {Intext Series in Mathematics},
 PUBLISHER = {Intext Educational Publishers, New York},
      YEAR = {1975},
     PAGES = {xvi+509},
   MRCLASS = {50A10},
  MRNUMBER = {428183},
MRREVIEWER = {J.\ B\"ohm},
}

@article{chiara-mattia-sosa2023,
title = {Convergence of metric measure spaces satisfying the CD condition for negative values of the dimension parameter},
journal = {Nonlinear Analysis},
volume = {237},
pages = {113366},
year = {2023},
issn = {0362-546X},
doi = {https://doi.org/10.1016/j.na.2023.113366},
url = {https://www.sciencedirect.com/science/article/pii/S0362546X2300158X},
author = {Mattia Magnabosco and Chiara Rigoni and Gerardo Sosa},
keywords = {Metric measure space, Curvature-dimension condition, Negative dimension, Convergence, Stability result},
abstract = {We study the problem of whether the curvature-dimension condition with negative values of the generalized dimension parameter is stable under a suitable notion of convergence. To this purpose, first of all we propose an appropriate setting to introduce the CD(K,N) condition for N<0, allowing metric measure structures in which the reference measure is quasi-Radon. Then in this class of spaces we define the distance diKRW, which extends the already existing notions of distance between metric measure spaces. Finally, we prove that if a sequence of metric measure spaces satisfying the CD(K,N) condition with N<0 is converging with respect to the distance diKRW to some metric measure space, then this limit structure is still a CD(K,N) space.}
}

@article {Ohta-negativeCD,
    AUTHOR = {Ohta, Shin-ichi},
     TITLE = {{$(K,N)$}-convexity and the curvature-dimension condition for
              negative {$N$}},
   JOURNAL = {J. Geom. Anal.},
  FJOURNAL = {Journal of Geometric Analysis},
    VOLUME = {26},
      YEAR = {2016},
    NUMBER = {3},
     PAGES = {2067--2096},
      ISSN = {1050-6926},
   MRCLASS = {53C23 (53C21 58E35)},
  MRNUMBER = {3511469},
MRREVIEWER = {Fernando Galaz-Garc\'{\i}a},
       DOI = {10.1007/s12220-015-9619-1},
       URL = {https://doi.org/10.1007/s12220-015-9619-1},
}

@inproceedings {ambrosioICMs,
    AUTHOR = {Ambrosio, Luigi},
     TITLE = {Calculus, heat flow and curvature-dimension bounds in metric measure spaces},
 BOOKTITLE = {Proceedings of the {I}nternational {C}ongress of
              {M}athematicians---{R}io de {J}aneiro 2018. {V}ol. {I}.
              {P}lenary lectures},
     PAGES = {301--340},
 PUBLISHER = {World Sci. Publ., Hackensack, NJ},
      YEAR = {2018},
      ISBN = {978-981-3272-90-3},
   MRCLASS = {30L10 (35R01 49Q20 53C21)},
  MRNUMBER = {3966731},
MRREVIEWER = {Juha\ K.\ Kinnunen},
DOI = {10.1142/9789813272880_0015}
}

@article {Chitour-Prandi-Rizzi24,
    AUTHOR = {Chitour, Y. and Prandi, D. and Rizzi, L.},
     TITLE = {Weyl's law for singular {R}iemannian manifolds},
   JOURNAL = {J. Math. Pures Appl. (9)},
  FJOURNAL = {Journal de Math\'ematiques Pures et Appliqu\'ees. Neuvi\`eme
              S\'erie},
    VOLUME = {181},
      YEAR = {2024},
     PAGES = {113--151},
      ISSN = {0021-7824,1776-3371},
   MRCLASS = {47B25 (26A12 35J10 35Q40 35R01 53C21 58J99 81Q10)},
  MRNUMBER = {4669597},
MRREVIEWER = {Christian\ Seifert},
       DOI = {10.1016/j.matpur.2023.10.004},
       URL = {https://doi.org/10.1016/j.matpur.2023.10.004},
}

\end{document}